\newcommand{\noun}[1]{\textsc{#1}}
\numberwithin{equation}{section}
\numberwithin{figure}{section}
\theoremstyle{plain}
\newtheorem{thm}{\protect\theoremname}[section]
  \theoremstyle{plain}
  \newtheorem{prop}[thm]{\protect\propositionname}
  \theoremstyle{plain}
  \newtheorem{cor}[thm]{\protect\corollaryname}
  \theoremstyle{plain}
  \newtheorem{exa}[thm]{\protect\examplename}
  \theoremstyle{plain}
  \newtheorem{lem}[thm]{\protect\lemmaname}
  \theoremstyle{definition}
  \newtheorem{defn}[thm]{\protect\definitionname}
  \providecommand{\corollaryname}{Corollary}
  \providecommand{\definitionname}{Definition}
  \providecommand{\lemmaname}{Lemma}
  \providecommand{\propositionname}{Proposition}
  \providecommand{\examplename}{Example}
\providecommand{\theoremname}{Theorem}
\begin{document}

\title{Universal Conformal Weights on Sobolev Spaces }

\author{V.Gol'dshtein and A.Ukhlov}

\begin{abstract}
The Riemann Mapping Theorem states the existence of a conformal homeomorphism $\varphi$ of a simply connected plane 
domain $\Omega\subset\mathbb C$ with non-empty boundary onto the unit disc $\mathbb D\subset \mathbb C$. 
In the first part of the paper, we study embeddings of Sobolev spaces $\overset{\circ}{W_{p}^{1}}(\Omega)$ 
into weighted Lebesgue spaces $L_{q}(\Omega,h)$ with a {}``universal'' weight that is the Jacobian of $\varphi$;
i.~e., $h(z):=J(z,\varphi)=| \varphi'(z)|^2$. Weighted Lebesgue spaces with such weights depend only on the conformal
structure of $\Omega$. For this reason, we call the weights $h(z)$ conformal weights. In the second part of the paper,
we prove compactness of embeddings of Sobolev spaces $\overset{\circ}{W_{2}^{1}}(\Omega)$ into $L_{q}(\Omega,h)$
for any $1\leq q<\infty$. With the help of Brennan's Conjecture, we extend these results to the Sobolev spaces
$\overset{\circ}{W_{p}^{1}}(\Omega)$. In this case, $q$ depends on $p$ and the integrability 
exponent for Brennan's Conjecture. The last part of the paper is devoted to applications to elliptic boundary value
problems.
\end{abstract}

\maketitle

{\bf Key words and phrases:} conformal mappings, Sobolev spaces, elliptic equations.

\section{Introduction }

Let $\Omega\subset\mathbb C$ be an arbitrary simply connected
plane domain with non-empty boundary. By the Riemann Mapping Theorem, there exists a conformal homeomorphism $\varphi$ 
of  $\Omega$  onto the unit disc $\mathbb D\subset \mathbb C$.

This study is focused on the weighted Poincar\'e-Sobolev inequalities 
\begin{equation}
\biggl(\int\limits _{\Omega}|f(z)|^{r}h(z)\, d\mu \biggr)^{\frac{1}{r}}\leq K\biggl(\int\limits _{\Omega}
|\nabla f(z)|^{p}d\mu\biggr)^{\frac{1}{p}}\label{eq:WPI}
\end{equation} for functions $f$ of the Sobolev space $\overset{\circ}{W_{p}^{1}}(\Omega)$ and a special weight $h(z):=J(z,\varphi)=|\varphi^{\prime} (z)|^2$ induced by $\varphi$. Recall that  $\overset{\circ}{W_{p}^{1}}(\Omega)$ is closure of the set of all smooth functions with compact support in $\Omega$ in the Sobolev space ${W_{p}^{1}}(\Omega)$ and $J(z,\varphi)$ is Jacobian of a conformal homeomorphisms 
$\varphi: \Omega\to\mathbb D$ at $z\in\Omega$.

{\it Here we have used the following notations: $z=x+iy$ is a complex number, $f(z)=f(x,y)$ is a real-valued function, 
$\nabla f(z)=(\frac{\partial f}{\partial x},\frac{\partial f}{\partial y})$ is the weak gradient of $f$, $\mu$ is the Lebesgue measure.}
 
In the present paper, we give an essentially self-contained exposition of a method for the study of such weighted 
inequalities and develop its applications to elliptic boundary value problems. 

The novel points include the use of a "transfer" scheme of Sobolev type embedding theorems from regular domains
to non-regular domains (proposed in \cite{GGu}) in combination with the Riemann Mapping Theorem and the Brennan's 
Conjecture. The ``transfer'' scheme is based on systematic applications of the theory of composition operators
on Sobolev spaces \cite{U1, VU1}. In \cite{GU} this scheme was applied to weighted Sobolev-type embedding operators
in non-regular domains.

The Poincar\'e-Sobolev-type inequalities have essential applications
in geometric analysis (see, for example, \cite{M}). The existence of
the weighted inequalities (\ref{eq:WPI}) is interesting even for bounded
domains in the case of unbounded weights $h(z)$. For unbounded weights,
the inequalities (\ref{eq:WPI}) contain additional information about
the boundary behavior of the functions $f\in\overset{\circ}{W_{p}^{1}}(\Omega)$. 

Note, that such type inequalities have an application \cite{AN} in a regularity result for the Poisson problem $-\Delta u=f$ , $u\vert_P=g$ on a polyhedral domain 
$P\subset \mathbb R^3$. 

The ``transfer'' scheme \cite{GGu} is simplified here because of the well-known fact: For any conformal homeomorphism 
$w=\varphi(z):\Omega\to\Omega'$ and any smooth function $f$ with square integrable derivatives, we have 
$$
\int\limits _{\Omega}|\nabla(f\circ\varphi)(z))|^{2}d\mu=\int\limits _{\Omega'}| \nabla f(w)|^{2}d\mu.
$$
This equality means that $\varphi$ induces an isometry of homogeneous Sobolev spaces $L^1_2(\Omega)$ 
and  $L^1_2(\Omega')$. It is one of the basic facts that makes it possible to ``transfer''
the Poincar\'e-Sobolev inequalities from the unit disc $\mathbb{D}\subset\mathbb{C}$ to an arbitrary simply 
connected plane domain with non-empty boundary $\Omega\subset\mathbb{C}$.

In the first part of the paper, we prove existence of bounded (compact) embeddings of the~Sobolev space
$\overset{\circ}{W_{2}^{1}}(\Omega)$ into weighted Lebesgue spaces
$L_{q}(\Omega,h)$ with the conformal weight $h(z):=J(z,\varphi)$ for any $q\in [1,\infty)$. 
Since two different conformal homeomorphisms
$\varphi:\Omega\to\mathbb{D}$ and $\tilde{\varphi}:\Omega\to\mathbb{D}$
can be connected by a~conformal automorphism $\eta:\mathbb{D}\to\mathbb{D}$ 
(i.e., $\varphi=\tilde{\varphi}\circ\eta$), the conformal weights induced by $\varphi$ and $\tilde{\varphi}$ 
are equivalent. It means that $h(z)=J(z,\varphi)\lesssim J(z,\tilde{\varphi})\lesssim J(z,\varphi)=h(z)$,
and therefore the weighted Lebesgue space $L_{q}(\Omega,h)$ does not depend on the choice
of a conformal homeomorphism and depends only on the conformal structure of $\Omega$. 

In the second part of the paper, we study the more complicated case of Sobolev spaces 
$\overset{\circ}{W_{p}^{1}}(\Omega)$,  $p\neq 2$. Applying results of Brennan's Conjecture about 
the integrability of the derivatives of conformal homeomorphisms $\varphi:\Omega \to \mathbb{D}$,
we prove that such homeomorphisms induce bounded composition operators from the homogeneous Sobolev spaces 
$L_{p}^{1}(\Omega)$ into $L_{q}^{1}(\mathbb{D})$ under some constraints on $p$ and $q$ that are 
consequences of the results of Brennan's Conjecture. Another ingredient of this study is necessary and 
sufficient conditions  \cite{U1} for boundedness of the composition operators from $L_{p}^{1}(\Omega)$ 
into $L_{q}^{1}(\mathbb{D})$ induced by Sobolev homeomorphisms. This result is rather general and is rearranged 
here for the conformal case.

In the last part of the paper, we show a standard application of the main results to boundary value problems 
for the Laplace operator. 

We use a version of Brennan's Conjecture for composition operators on Sobolev spaces \cite{GU3} proposed recently 
by the authors. The original Brennan's Conjecture concerns the integrability of derivatives of plane
conformal homeomorphisms $\varphi:\Omega\to\mathbb{D}$ that map a simply
connected plane domain with non-empty boundary $\Omega\subset\mathbb{C}$
onto the unit disc $\mathbb{D}\subset\mathbb{C}$.

The conjecture
\cite{Br} is that 
\begin{equation}
\int\limits _{\Omega}|\varphi'(z)|^{s}~d\mu<+\infty,\quad\text{for all}\quad\frac{4}{3}<s<4.\label{eq:BR}
\end{equation}

For $4/3<s<3$, it is a comparatively easy consequence of the Koebe distortion theorem (see, for example, \cite{Ber}).
J.~Brennan \cite{Br} (1973) extended this range to $4/3<s<3+\delta$, where $\delta>0$, and conjectured it to hold 
for $4/3<s<4$. The example of
$\Omega=\mathbb{C}\setminus(-\infty,-1/4]$ shows that this range
of $s$ cannot be extended. The upper bound of those $s$ for which
(\ref{eq:BR}) is known to hold has been increased to $s\leq3.399$ by Ch. Pommerenke,
to $s\leq3.421$ by D. Bertilsson, and then to $s\leq 3.752$ by Hedenmalm and Shimorin (2005).  These
results and more information can be found in \cite{Ber,Pom, Shim}.

For conformal homeomorphisms $\psi:\mathbb{D}\to\Omega$, Brennan's
Conjecture can be reformulated as the Inverse Brennan's Conjecture 
\[
\int\limits _{\mathbb{D}}|\psi'(w)|^{\alpha}~d\mu<+\infty,\quad\text{for all}\quad-2<\alpha<2/3
\]
where $\alpha=2-s$. 

The results of Inverse Brennan's Conjecture lead to the conjecture on the existence
of bounded composition operators of $\overset{\circ}{W_{p}^{1}}(\Omega)$ to $\overset{\circ}{W_{q}^{1}}(\mathbb D)$
for all $4/3<p<2$ and all $1\leq q<2p/(4-p)$  (Theorem \ref{thm:InverseCompL}).  As a corollary, we obtain a conjecture about
the existence of compact embeddings of $\overset{\circ}{W_{p}^{1}}(\Omega)$ into $L_{r}(\Omega,h)$
for all 
$$
 1\leq r<\frac{p}{2-p}.
$$
If $\alpha_{0}>-2$ is the best known estimate in the Inverse Brennan's 
Conjecture, i.~e. the Inverse Brennan's Conjecture holds for any $\alpha\in [\alpha_0, \frac{2}{3})$, then 
$$
1\leq r\leq \frac{2p}{2-p}\cdot\frac{\left|\alpha_{0}\right|}{2+\left|\alpha_{0}\right|}<\frac{p}{2-p}
$$
is the best estimate for $r$ for these embeddings \cite{GU3}.

A connection between Brennan's Conjecture and composition operators was established in \cite{GU3}:

\begin{thm}{\bf Equivalence Theorem.} 
Brennan's Conjecture (\ref{eq:BR}) holds for a number $s\in ({4}/{3};4)$ if and only if any conformal 
homeomorphism $\varphi : \Omega\to\mathbb D$ induces a bounded composition operator
$$
\varphi^{\ast}: L^1_{p}(\mathbb D)\to L^1_{q(p,s)}(\Omega)
$$
for any $p\in (2;+\infty)$ and $q(p,s)=ps/(p+s-2)$.
\end{thm}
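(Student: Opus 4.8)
The plan is to reduce the statement to the Vodopyanov--Ukhlov criterion for boundedness of composition operators on homogeneous Sobolev spaces, stated in \cite{U1, VU1}, and then to verify that, in the conformal case, the analytic distortion condition in that criterion coincides exactly with Brennan's integral (\ref{eq:BR}). First I would recall the criterion: for $1\le q<p<\infty$, a homeomorphism $\varphi:\Omega\to\mathbb D$ of finite distortion with $\varphi\in W^1_{1,\mathrm{loc}}(\Omega)$ induces a bounded composition operator $\varphi^\ast:L^1_p(\mathbb D)\to L^1_q(\Omega)$ if and only if the $p$-distortion function
$$K_p(z)=\frac{|D\varphi(z)|}{|J(z,\varphi)|^{1/p}}$$
belongs to $L^\kappa(\Omega)$ with $\tfrac1\kappa=\tfrac1q-\tfrac1p$, i.e. $\kappa=\tfrac{pq}{p-q}$; moreover the operator norm equals $\|K_p\mid L^\kappa(\Omega)\|$. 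Since $p>2$ and $q=q(p,s)=ps/(p+s-2)$ give $p-q=p(p-2)/(p+s-2)>0$, the required inequality $q<p$ holds, so the criterion applies.

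Next I would insert the conformal data. A conformal homeomorphism is holomorphic, hence smooth, so it automatically satisfies the regularity and finite-distortion hypotheses (its linear distortion is identically $1$); moreover $|D\varphi(z)|=|\varphi'(z)|$ and $J(z,\varphi)=|\varphi'(z)|^2$. Substituting these into $K_p$ collapses the distortion to a pure power of the derivative,
$$K_p(z)=\frac{|\varphi'(z)|}{\bigl(|\varphi'(z)|^2\bigr)^{1/p}}=|\varphi'(z)|^{(p-2)/p}.$$
Thus $K_p\in L^\kappa(\Omega)$ if and only if $\int_\Omega|\varphi'(z)|^{\frac{p-2}{p}\kappa}\,d\mu<\infty$.

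The decisive step is the arithmetic of the exponent. Using $\kappa=pq/(p-q)$, $p-q=p(p-2)/(p+s-2)$ and $q=ps/(p+s-2)$, a direct computation gives
$$\frac{p-2}{p}\,\kappa=\frac{(p-2)\,q}{p-q}=\frac{(p-2)\cdot\frac{ps}{p+s-2}}{\frac{p(p-2)}{p+s-2}}=s,$$
so the distortion integrability condition is exactly
$$\int_\Omega|\varphi'(z)|^{s}\,d\mu<\infty,$$
which is Brennan's Conjecture (\ref{eq:BR}) for the exponent $s$. Since the Vodopyanov--Ukhlov statement is an equivalence, this single identification yields both implications at once: (\ref{eq:BR}) holds for $s$ precisely when $K_p\in L^\kappa(\Omega)$ for every such pair $(p,q(p,s))$, which is precisely when $\varphi^\ast$ is bounded; the operator norm is then $\bigl(\int_\Omega|\varphi'|^s\,d\mu\bigr)^{1/\kappa}$.

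I expect the main obstacle to be bookkeeping rather than a genuine analytic difficulty: the substance is contained in the cited composition-operator criterion, and the work here is to confirm that the conformal structure trivializes its hypotheses and that the exponent $\frac{p-2}{p}\kappa$ reduces identically to $s$ under the choice $q=ps/(p+s-2)$. The one point requiring care is the quantification over $p$: I would note that the identity holds for every $p\in(2,+\infty)$ simultaneously, so the ``for any $p$'' in the statement is consistent, and that the asserted equivalence is between Brennan's Conjecture for the fixed $s$ and boundedness of $\varphi^\ast$ for the whole admissible family $(p,q(p,s))$.
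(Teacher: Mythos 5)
Your proof is correct and follows essentially the paper's own route: the paper's Theorem \ref{thm:CompNecSuf} (the conformal specialization of the Ukhlov criterion you invoke, proved there via the localization principle of Theorem \ref{thm:AddFun}) states precisely that boundedness of $\varphi^{\ast}:L^1_p(\mathbb D)\to L^1_q(\Omega)$ is equivalent to $\int_\Omega|\varphi'(z)|^{(p-2)q/(p-q)}\,d\mu<\infty$, and your arithmetic $(p-2)q/(p-q)=s$ for $q=ps/(p+s-2)$ is exactly the identification underlying the Equivalence Theorem. The only items to add are the trivial verification $q(p,s)\geq 1$ required by the criterion (which holds since $s>1$ gives $s(p-1)>p-2$), and the caveat that the criterion yields two-sided norm estimates rather than the exact equality of operator norm with $\bigl(\int_\Omega|\varphi'|^{s}\,d\mu\bigr)^{1/\kappa}$ that you assert; neither affects the equivalence.
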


{\bf Remark.} Brennan's Conjecture is correct for some special classes of domains: starlike domains, 
bounded domains which boundaries are locally graphs of continuous functions etc.

\section{Notation and Preliminary Results about Composition Operators}

We follow \cite{HKM} for notation and basic facts about weighted
Lebesgue spaces.

Let $\Omega\subset\mathbb{R}^{n}$ be a domain and let $v:\Omega\to \mathbb R$
be a locally integrable almost everywhere positive real valued function in $\Omega$
( i.e $v(z)>0$ almost everywhere). Then a Radon measure $\nu$ is
canonically associated with the weight function $v$:
\[
\nu(E):=\int_{E}v(z)d\mu.
\]

By the local integrability of $v$, the measure $\nu$ and the Lebesgue
measure $\mu$ are absolutely continuous with respect to one another:
\[
d\nu=v(z)d\mu.  
\]
 In what follows, the weight $v$ and the measure $\nu$ will be identified.
The sets of measure zero are the same for the Lebesgue measure $\mu$ and for~$\nu$.
It means that we do not need to specify what convergence almost everywhere is. 

Denote by 
$$
\mathcal{V}(\Omega):=\{v\in L_{1, loc}(\Omega):v(z)>0\,\, \text{a.~e. on}\,\, \Omega \}
$$ the set of all such weights. Here $L_{1, loc}(\Omega)$
is the space of locally integrable functions in $\Omega$.

For $1\leq p<\infty$ and $v\in\mathcal{V}(\Omega)$, consider the
weighted Lebesgue space 
\[
L_{p}(\Omega,v):=\left\{ f:\Omega\to R:\| f\mid {L_{p}(\Omega,v)}\|:=\left(\int_{\Omega}|f(z)|^{p}d\nu\right)^{1/p}<\infty \right\}.
\]
It is a Banach space for the norm $\|f\mid{L_{p}(\Omega,v)}\|$.

The space $L_{p}(\Omega,v)$ may fail to embed into $L_{1, loc}(\Omega)$. 

\begin{prop}
\cite{HKM}\label{WL} If $v^{\frac{1}{1-p}}\in L_{1,loc}(\Omega)$
and $1< p<\infty$ then the embedding operator $i:L_{p}(\Omega,v)\to L_{1, loc}(\Omega)$
is bounded. 

If $v^{-1}\in L_{\infty, loc}(\Omega)$ then the embedding operator
$i:L_{1}(\Omega,v)\to L_{1, loc}(\Omega)$ is bounded. 
\end{prop}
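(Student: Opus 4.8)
The plan is to reduce both assertions to the only meaningful interpretation of boundedness of the embedding into $L_{1,loc}(\Omega)$: for every compact set $K\subset\Omega$ one produces a constant $C_K$, depending only on $K$, $v$, and $p$, such that the local estimate $\int_K |f|\,d\mu \leq C_K\,\|f\mid L_p(\Omega,v)\|$ holds for all $f$ in the weighted space. Since $L_{1,loc}(\Omega)$ carries the locally convex topology defined by the seminorms $f\mapsto\int_K|f|\,d\mu$ over an exhaustion of $\Omega$ by compacts, continuity of the linear embedding $i$ is \emph{equivalent} to this uniform family of local estimates. So the whole task is to establish such an estimate on an arbitrary $K$.

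For the first assertion I would fix a compact $K$ and factor the integrand as $|f| = \bigl(|f|\,v^{1/p}\bigr)\cdot v^{-1/p}$, then apply H\"older's inequality on $K$ with exponent $p$ and its conjugate $p' = p/(p-1)$. This yields
\[
\int_K |f|\,d\mu \leq \left(\int_K |f|^p v\,d\mu\right)^{1/p}\left(\int_K v^{-p'/p}\,d\mu\right)^{1/p'}.
\]
The single bookkeeping step on which everything hinges is the exponent identity $-p'/p = -1/(p-1) = 1/(1-p)$, which turns the second factor into exactly $\bigl(\int_K v^{1/(1-p)}\,d\mu\bigr)^{1/p'}$; this is finite precisely by the hypothesis $v^{1/(1-p)}\in L_{1,loc}(\Omega)$. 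Replacing $K$ by $\Omega$ in the first factor (which only increases it) gives the local estimate with $C_K = \bigl(\int_K v^{1/(1-p)}\,d\mu\bigr)^{1/p'}$, completing this case.

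For the second assertion, with $p=1$, H\"older is superfluous and a direct pointwise bound suffices: on $K$ write $|f| = (|f|v)\cdot v^{-1}$ and use $v^{-1}\in L_{\infty,loc}(\Omega)$ to get $\int_K |f|\,d\mu \leq \|v^{-1}\mid L_\infty(K)\|\int_K |f|v\,d\mu \leq \|v^{-1}\mid L_\infty(K)\|\,\|f\mid L_1(\Omega,v)\|$, i.e. the local estimate with $C_K = \|v^{-1}\mid L_\infty(K)\|$.

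I do not expect any genuine obstacle here, as the proof is in essence one application of H\"older's inequality together with a trivial $L_\infty$ bound. The only points needing care are the exponent arithmetic that matches the H\"older factor $v^{-p'/p}$ to the hypothesized weight $v^{1/(1-p)}$, and the preliminary remark that ``the embedding into $L_{1,loc}(\Omega)$ is bounded'' must be read as the uniform-in-$f$ family of local $L_1(K)$ estimates rather than a single norm bound, since $L_{1,loc}(\Omega)$ is not normed.
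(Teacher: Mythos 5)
Your proof is correct. The paper itself gives no argument for this proposition—it is quoted directly from the cited reference [HKM]—and your argument (H\"older's inequality on each compact $K$ with the exponent identity $-p'/p = 1/(1-p)$ for the case $1<p<\infty$, and the trivial essential-supremum bound for $p=1$, with boundedness into $L_{1,loc}(\Omega)$ read as the family of local $L_1(K)$ estimates) is precisely the standard proof found there, so there is nothing to compare beyond noting the agreement.
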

For $1<p<\infty$, we put
\[
\mathit{\mathcal{V}_{p}(\Omega):=\left\{ v\in\mathcal{V}(\Omega):v^{\frac{1}{1-p}}\in L_{1, loc}(\Omega)\right\} }
\]
and for $p=1$, 
\[
\mathit{\mathcal{V}_{1}(\Omega):=\left\{ v\in\mathcal{V}(\Omega):v^{-1}\in L_{\infty, loc}(\Omega)\right\} }.
\]

\begin{cor}
If a weight $v$ is continuous and positive then $i:L_{p}(\Omega,v)\to L_{1, loc}(\Omega)$
is bounded.
\end{cor}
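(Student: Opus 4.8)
The plan is to reduce the claim to Proposition \ref{WL} by verifying its hypotheses, and the single structural fact I would exploit is that a continuous positive function is locally bounded away from zero. On any compact set $K\subset\Omega$ the Weierstrass extreme value theorem guarantees that $v$ attains a minimum $c_{K}:=\min_{z\in K}v(z)>0$ and a maximum $C_{K}:=\max_{z\in K}v(z)<\infty$. Thus $0<c_{K}\leq v(z)\leq C_{K}<\infty$ for every $z\in K$, so both $v$ and its reciprocal are bounded on each compact subset of $\Omega$.

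For the range $1<p<\infty$ I would check that $v^{1/(1-p)}\in L_{1,loc}(\Omega)$. Since $p>1$, the exponent $1/(1-p)$ is negative, so $v^{1/(1-p)}=v^{-1/(p-1)}$, and on each compact $K$ the lower bound $v\geq c_{K}$ gives $v^{-1/(p-1)}\leq c_{K}^{-1/(p-1)}$. A bounded function is integrable over a compact set, hence $v^{1/(1-p)}\in L_{1,loc}(\Omega)$, and the first assertion of Proposition \ref{WL} yields boundedness of the embedding $i:L_{p}(\Omega,v)\to L_{1,loc}(\Omega)$.

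For the remaining case $p=1$ I would instead verify $v^{-1}\in L_{\infty,loc}(\Omega)$: on each compact $K$ the same lower bound gives $v^{-1}\leq c_{K}^{-1}$, so $v^{-1}$ is locally essentially bounded, and the second assertion of Proposition \ref{WL} applies. This exhausts all admissible exponents $p\geq 1$.

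I do not anticipate any genuine obstacle; the entire content is the observation that continuity together with positivity forces a positive local lower bound on $v$, after which the two cases of Proposition \ref{WL} close the argument directly. The only point demanding a little care is tracking the sign of the exponent $1/(1-p)$, so that it is precisely the lower bound $v\geq c_{K}$ (and not an upper bound) that produces the required local integrability.
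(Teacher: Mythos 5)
Your proof is correct and follows exactly the paper's route: the paper also deduces the corollary from Proposition \ref{WL} by noting that a continuous positive weight lies in $\mathcal{V}_{p}(\Omega)$ and $\mathcal{V}_{1}(\Omega)$, which is precisely the verification you carry out via the extreme value theorem. You merely make explicit the local lower bound argument that the paper leaves as immediate.
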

This follows immediately from Proposition 2.1 because a continuous and
positive weight belongs to $\mathcal{V}_{p}(\Omega)$ and also to
$\mathcal{V}_{1}(\Omega)$. 

Define the Sobolev space $W_{p}^{1}(\Omega)$, $1\leq p<\infty$,
as a normed space of locally integrable weakly differentiable functions
$f:\Omega\to\mathbb{R}$ equipped with the following norm: 
\[
\|f\mid W_{p}^{1}(\Omega)\|=\biggr(\int\limits _{\Omega}|f(z)|^{p}\, d\mu\biggr)^{1/p}+\biggr(\int\limits _{\Omega}|\nabla f(z)|^{p}\, d\mu\biggr)^{1/p}.
\]
We will also need homogeneous seminormed Sobolev spaces $L_{p}^{1}(\Omega)$
of weakly differentiable functions $f:\Omega\to\mathbb{R}$ equipped
with the following seminorms: 
\[
\|f\mid L_{p}^{1}(\Omega)\|=\biggr(\int\limits _{\Omega}|\nabla f(z)|^{p}\, d\mu\biggr)^{1/p}.
\]

Recall that the embedding operator $i:L_{p}^{1}(\Omega)\to L_{1, loc}(\Omega)$
is bounded.

The Sobolev space $\overset{\circ}{W_{p}^{1}}(\Omega)$, $1\leq p<\infty$,
is defined as the closure of the space of smooth functions with compact
supports $C_{0}^{\infty}(\Omega)$ in the norm of $W_{p}^{1}(\Omega)$.

Let $\Omega$ and $\Omega'$ be domains in $\mathbb{C}$. We say that a conformal
homeomorphism $\varphi:\Omega\to\Omega'$ induces a bounded composition
operator 
\[
\varphi^{\ast}:L_{p}^{1}(\Omega')\to L_{q}^{1}(\Omega),\,\,\,1\leq q\leq p\leq\infty,
\]
by the composition rule $\varphi^{\ast}(f)=f\circ\varphi$, if for
any $f\in L_{p}^{1}(\Omega')$, the composition $\varphi^{\ast}(f)\in L_{q}^{1}(\Omega)$
and there exists a constant $K<\infty$ such that 
\[
\|\varphi^{\ast}(f)\mid L_{q}^{1}(\Omega)\|\leq K\|f\mid L_{p}^{1}(\Omega')\|.
\]

The theory of composition operators on Sobolev spaces goes back to
Reshetnyak's problem about the description of all isomorphisms of
the~homogeneous Sobolev spaces $L_{n}^{1}(\Omega)$ and $L_{n}^{1}(\Omega')$ 
which are induced by quasiconformal
mappings of Euclidean domains. In \cite{VG1}, it was proved that 
a~homeomorphism $\varphi:\Omega\to\Omega'$ between Euclidean domains $\Omega \subset \mathbb{R}^{n}$ 
and $\Omega'\subset \mathbb{R}^{n}$ induces an isomorphism of $L_{n}^{1}(\Omega)$ and $L_{n}^{1}(\Omega')$ 
by the composition rule $\varphi^{\ast}(f)=f\circ\varphi$ if and only if $\varphi$ is quasiconformal. 

In the framework of this approach to geometric function theory, there appears 
the problem about the description of homeomorphisms inducing isomorphisms
of Sobolev spaces by the composition rule. The Sobolev spaces
$W_{p}^{1}(\Omega)$ and $W_{p}^{1}(\Omega')$, $p>n$, were considered in \cite{VG2}, the Sobolev
spaces $W_{p}^{1}(\Omega)$ and $W_{p}^{1}(\Omega')$, $n-1<p<n$, were treated in \cite{GRo}, 
and Sobolev spaces $W_{p}^{1}(\Omega)$ and $W_{p}^{1}(\Omega')$, $1\leq p<n$, were considered in \cite{Mar}. 
In \cite{MSh}, the theory of multipliers was applied to this composition problem. Bounded composition 
operators on Sobolev spaces were studied in \cite{Vod1} from another (but close) point of view. A geometric
description of homeomorphisms preserving the Sobolev spaces $L_{p}^{1}(\Omega')$
and $L_{p}^{1}(\Omega)$ was obtained in \cite{GGR} for $p>n-1$.

New problems arise when we study composition operators on Sobolev
spaces with decreasing integrability of the first weak derivatives.
This problem was first studied in~\cite{U1}. In this
case, a significant role in the description of the composition operators
is played by the so-called (quasi)additive set functions defined on open sets.

The main result of \cite{U1} asserts that 
 \begin{thm} \label{CompTh}
 A homeomorphism $\varphi:\Omega\to\Omega'$ between two domains
$\Omega,\Omega'\subset\mathbb{R}^{n}$ induces a bounded
composition operator 
\[
\varphi^{\ast}:L_{p}^{1}(\Omega')\to L_{q}^{1}(\Omega),\,\,\,1\leq q< p<\infty,
\]
 if and only if $\varphi\in W_{1,loc}^{1}(\Omega)$, has
finite distortion, and 
\[
K_{p,q}(f;\Omega)=\biggl(\int\limits _{\Omega}\biggl(\frac{|D\varphi(x)|^{p}}{|J(x,\varphi)|}\biggr)^{\frac{q}{p-q}}d\mu\biggr)^{\frac{p-q}{pq}}<\infty.
\]
\end{thm}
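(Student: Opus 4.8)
The plan is to prove the two implications separately. Sufficiency reduces to the chain rule together with H\"older's inequality, while necessity rests on the calculus of quasiadditive set functions, which is where the real work lies.

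For sufficiency, suppose $\varphi\in W^1_{1,loc}(\Omega)$ has finite distortion and $K_{p,q}(\varphi;\Omega)<\infty$. For $f\in L^1_p(\Omega')$ the chain rule for Sobolev functions composed with a finite-distortion homeomorphism gives $|\nabla(f\circ\varphi)(x)|\le|\nabla f(\varphi(x))|\,|D\varphi(x)|$ almost everywhere, where on the set $\{J(\cdot,\varphi)=0\}$ finite distortion forces $D\varphi=0$ so that both sides vanish. The key device is to insert the Jacobian artificially:
\[
|\nabla f(\varphi(x))|^{q}|D\varphi(x)|^{q}=\Bigl(|\nabla f(\varphi(x))|^{q}|J(x,\varphi)|^{q/p}\Bigr)\cdot\frac{|D\varphi(x)|^{q}}{|J(x,\varphi)|^{q/p}},
\]
and then to apply H\"older's inequality over $\Omega$ with the conjugate exponents $p/q$ and $p/(p-q)$. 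The second factor contributes exactly $K_{p,q}(\varphi;\Omega)^{q}$, while the first becomes $\bigl(\int_{\Omega}|\nabla f(\varphi(x))|^{p}|J(x,\varphi)|\,d\mu\bigr)^{q/p}$. The change-of-variables inequality $\int_{\Omega}g(\varphi(x))|J(x,\varphi)|\,d\mu\le\int_{\Omega'}g\,d\mu$, valid for finite-distortion homeomorphisms, applied to $g=|\nabla f|^{p}$ bounds this by $\|f\mid L^1_p(\Omega')\|^{q}$. Combining the pieces yields $\|\varphi^{\ast}f\mid L^1_q(\Omega)\|\le K_{p,q}(\varphi;\Omega)\,\|f\mid L^1_p(\Omega')\|$.

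For necessity, suppose $\varphi^{\ast}$ is bounded with norm $K$. First I would recover the regularity of $\varphi$: applying the operator to cut-offs of the coordinate functions $y\mapsto y_{i}$ shows that each component $\varphi_{i}$ lies in $L^1_{q,loc}(\Omega)$, and an absolute-continuity-on-lines argument upgrades this to $\varphi\in W^1_{1,loc}(\Omega)$; the same test functions, localized near points where $J(\cdot,\varphi)$ vanishes, force $D\varphi=0$ there, giving finite distortion. The decisive part is the integral bound, and to this end I would attach to the operator the set function
\[
\Phi(U)=\sup\Bigl\{\|\varphi^{\ast}f\mid L^1_q(\varphi^{-1}(U))\|^{\frac{pq}{p-q}}:\ f\in L^1_p(U),\ \|f\mid L^1_p(U)\|\le 1\Bigr\}
\]
on open subsets $U\subset\Omega'$, and prove that it is monotone and countably quasiadditive on disjoint families, with $\Phi(\Omega')\le K^{pq/(p-q)}<\infty$.

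One then invokes the differentiation theorem for such set functions: the derivative $\Phi'$ with respect to Lebesgue measure on $\Omega'$ exists almost everywhere and satisfies $\int_{\Omega'}\Phi'\,d\mu\le\Phi(\Omega')$. Testing $\Phi$ on small balls $B$ about $\varphi(x)$ with functions approximating linear ones, and using the approximate differentiability of $\varphi$ at almost every $x$, one computes
\[
\Phi'(\varphi(x))=\Bigl(\frac{|D\varphi(x)|^{p}}{|J(x,\varphi)|}\Bigr)^{\frac{q}{p-q}}\frac{1}{|J(x,\varphi)|}.
\]
The change of variables $y=\varphi(x)$ then absorbs the extra Jacobian, so that $\int_{\Omega'}\Phi'\,d\mu=\int_{\Omega}\bigl(|D\varphi|^{p}/|J(\cdot,\varphi)|\bigr)^{q/(p-q)}\,d\mu=K_{p,q}(\varphi;\Omega)^{pq/(p-q)}$, and the finiteness of $\Phi(\Omega')$ forces $K_{p,q}(\varphi;\Omega)<\infty$. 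I expect the main obstacle to be precisely this set-function machinery: establishing the quasiadditivity of $\Phi$, which requires gluing locally optimal test functions across disjoint open sets without losing more than a controlled constant, and justifying both the pointwise computation of $\Phi'$ and the change of variables for maps of finite distortion that need not enjoy the Luzin $N$-property.
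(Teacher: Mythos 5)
Your outline is correct and follows essentially the same route that the paper (and its source \cite{U1,VU1}) takes: sufficiency by inserting $|J(x,\varphi)|^{q/p}$ and applying H\"older with exponents $p/q$ and $p/(p-q)$ plus the change-of-variables inequality, and necessity via a quasiadditive set function attached to the operator, its Lebesgue differentiation, and testing on balls with cut-off coordinate functions. Note, however, that the paper does not actually prove Theorem~\ref{CompTh} in this generality: it quotes it from \cite{U1} and only reproduces the argument in the conformal plane case (Theorem~\ref{thm:CompNecSuf}), where it takes the set-function machinery (Theorem~\ref{thm:AddFun}) as a black box; what you propose to prove from scratch (monotonicity, countable additivity, and boundedness of $\Phi$) is precisely the part the paper defers to the references, and it is indeed where the technical work lies. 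One correction to your necessity step: testing $\Phi$ on small balls with (cut-off) linear functions can only give a \emph{lower} bound on the derivative, of the form $\Phi'(\varphi(x))\geq c\bigl(|D\varphi(x)|^{p}/|J(x,\varphi)|\bigr)^{q/(p-q)}|J(x,\varphi)|^{-1}$, not the exact equality you assert --- the supremum defining $\Phi$ ranges over all admissible functions, and nothing rules out better competitors. This is harmless, since the conclusion only needs the lower bound together with $\int_{\Omega'}\Phi'\,d\mu\leq\Phi(\Omega')<\infty$; this one-sided estimate is exactly how the paper's conformal proof proceeds (there it appears as $|\varphi'(\varphi^{-1}(w))|^{q-2}\leq C(\Phi'(w))^{(p-q)/p}$). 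Similarly, your derivation of finite distortion "by localizing near $\{J=0\}$" is stated too loosely: in \cite{U1,VU1} it comes out of the same pointwise differentiation estimate, which forces $D\varphi=0$ almost everywhere on the zero set of the Jacobian, and your chain-rule step in the sufficiency direction needs the usual approximation of $f\in L^1_p(\Omega')$ by smooth functions, since $\varphi$ is only assumed to be a $W^1_{1,loc}$ homeomorphism of finite distortion rather than smooth.
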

Here $D\varphi(x)$ is the formal Jacobi matrix of $\varphi$ at $x\in\Omega$ and $J(x,\varphi)=\det D\varphi(x)$ is its Jacobian.
The norm $|D\varphi(x)|$ of the matrix is the norm of the linear operator defined
by this matrix in the Euclidean space $\mathbb R^n$.
 In \cite{GGu}, this class of mappings was studied in connection
with the Sobolev-type embedding theorems. 

The detailed study of composition
operators on Sobolev spaces was carried out in \cite{VU1}. The composition
operators on Sobolev spaces in the limit case $p=\infty$ were studied
in \cite{GU1,GU2}.

Define the $p$-dilatation of a diffeomorphism $\varphi:\Omega\to\Omega'$ as 
$$
K_p(x,\varphi)=\frac{|D \varphi(x)|^p}{|J(x,\varphi)|}
$$
and introduce the $(p,q)$-dilatation as 
\[
K_{p,q}(f;\Omega)=\biggl(\int\limits _{\Omega}\biggl(K_p(x,\varphi)\biggr)^{\frac{q}{p-q}}d\mu\biggr)^{\frac{p-q}{pq}}.
\]

For plane conformal homeomorphisms ($n=2$), the $p$-dilatation is equal to $| \varphi'(z)|^{p-2}$ for any $ p\in[1,\infty)$. Of course, for $p=2$ the $2$-dilatation is the classical conformal dilatation ${|\varphi'(z))|^{2}}/{J(z, \varphi)}$ and is equal to the unity for conformal mappings.

\section{Weighted Poincar\'e-Sobolev Inequalities for the Sobolev Space $\protect\overset{\circ}{W_{2}^{1}}(\Omega)$}

In this section, we study the weighted Poincar\'e-type inequalities for conformal weights.

First we formulate a well known property of conformal homeomorphisms:
\begin{lem}
\label{lem:isometry}Let $\Omega$ and $\Omega'$ be two plane domains.
Any conformal homeomorphism $w=\varphi(z):\Omega\to\Omega'$ induces an
isometry of spaces $L_{2}^{1}(\Omega')$ and $L_{2}^{1}(\Omega)$.\end{lem}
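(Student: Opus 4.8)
The plan is to prove the seminorm-preserving identity directly from the chain rule and the change-of-variables formula, exploiting the fact that the Jacobi matrix of a conformal map is pointwise a scalar multiple of a rotation. First I would record the local algebraic consequence of conformality. Writing $\varphi=u+iv$ with $u,v$ real-valued, the Cauchy--Riemann equations $u_x=v_y$ and $u_y=-v_x$ force the Jacobi matrix
\[
D\varphi(z)=\begin{pmatrix} u_x & u_y \\ v_x & v_y\end{pmatrix}=\begin{pmatrix} u_x & u_y \\ -u_y & u_x\end{pmatrix}
\]
to be of the form $|\varphi'(z)|\,R(z)$, where $R(z)$ is orthogonal and $|\varphi'(z)|^2=u_x^2+u_y^2=J(z,\varphi)$. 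For $f$ smooth on $\Omega'$, the chain rule gives $\nabla(f\circ\varphi)(z)=(D\varphi(z))^{T}\,\nabla f(\varphi(z))$, and since $(D\varphi(z))^{T}=|\varphi'(z)|\,R(z)^{T}$ is again $|\varphi'(z)|$ times an orthogonal matrix, it scales Euclidean length by the factor $|\varphi'(z)|$. Hence
\[
|\nabla(f\circ\varphi)(z)|^{2}=|\varphi'(z)|^{2}\,|\nabla f(\varphi(z))|^{2}=J(z,\varphi)\,|\nabla f(\varphi(z))|^{2}.
\]

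The second step is to integrate this pointwise identity and change variables. Using the above together with the change-of-variables formula for the diffeomorphism $\varphi$, whose Jacobian is exactly $J(z,\varphi)$, we obtain
\[
\int\limits_{\Omega}|\nabla(f\circ\varphi)(z)|^{2}\,d\mu=\int\limits_{\Omega}|\nabla f(\varphi(z))|^{2}\,J(z,\varphi)\,d\mu=\int\limits_{\Omega'}|\nabla f(w)|^{2}\,d\mu,
\]
so that $\|\varphi^{\ast}(f)\mid L_{2}^{1}(\Omega)\|=\|f\mid L_{2}^{1}(\Omega')\|$ for every smooth $f$. The point is that the weight $|\varphi'|^{2}$ produced by the gradient transformation and the Jacobian $J(z,\varphi)$ produced by the area element coincide and cancel; this cancellation is what distinguishes the exponent $p=2$ and makes $\varphi$ an isometry rather than merely a bounded operator.

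Finally I would pass from smooth $f$ to general $f\in L_{2}^{1}(\Omega')$ and establish surjectivity. Since a conformal homeomorphism is an analytic diffeomorphism with non-vanishing derivative, it is bi-Lipschitz on compact subsets, so the chain rule above holds in the weak sense for every $f\in L_{2}^{1}(\Omega')$ and the seminorm identity persists (by approximation, if one prefers to reduce to the smooth case). Applying the same computation to the conformal inverse $\varphi^{-1}:\Omega'\to\Omega$ shows that $\varphi^{\ast}$ is onto, whence $\varphi^{\ast}$ is an isometric isomorphism of $L_{2}^{1}(\Omega')$ onto $L_{2}^{1}(\Omega)$. The only genuinely delicate point is the rigorous justification of the chain rule and change of variables for non-smooth Sobolev functions rather than for smooth ones; this rests on the regularity of $\varphi$ and on the standard Sobolev composition theory (cf. Theorem \ref{CompTh}), and I would treat it as the technical heart while regarding the identity $|\nabla(f\circ\varphi)|^{2}=J(z,\varphi)\,|\nabla f\circ\varphi|^{2}$ as the conceptual core.
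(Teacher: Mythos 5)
Your proposal is correct and follows essentially the same route as the paper: the pointwise identity $|\nabla(f\circ\varphi)|^{2}=|\varphi'|^{2}\,|\nabla f\circ\varphi|^{2}=J(z,\varphi)\,|\nabla f\circ\varphi|^{2}$, followed by the change-of-variables formula and a density argument to pass from smooth functions to all of $L_{2}^{1}(\Omega')$. Your additional touches --- the explicit Cauchy--Riemann derivation of $D\varphi=|\varphi'|R$ and the observation that applying the same computation to $\varphi^{-1}$ yields surjectivity --- merely make explicit what the paper leaves implicit.
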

\begin{proof}
Let $f\in L_{2}^{1}(\Omega')$ be a smooth function. Then the smooth function $g=f\circ\varphi$
belongs to $L_{2}^{1}(\Omega)$ because 
\begin{multline*}
\|\nabla g\mid L_{2}({\Omega})\|=\biggl(\int\limits _{{\Omega}}|\nabla(f\circ\varphi(z))|^{2}~d\mu\biggr)^{\frac{1}{2}}=\biggl(\int\limits _{{\Omega}}|\nabla f|^{2}(\varphi(z))|\varphi'(z))|^{2}~d\mu\biggr)^{\frac{1}{2}}\\
=\biggl(\int\limits _{{\Omega}}|\nabla f|^{2}(\varphi(z))J(z,\varphi)~d\mu\biggr)^{\frac{1}{2}}=\biggl(\int\limits _{\Omega'}|\nabla f|^{2}(w)~d\mu\biggr)
=\|\nabla f\mid L_{2}(\Omega')\|.
\end{multline*}

We used  the following conformal equality:  $|\varphi'(z))|^{2}=J(\varphi,z)$ for every $z\in\Omega$. Approximating an arbitrary function 
$f\in L_{2}^{1}(\Omega')$ by smooth functions, we obtain an isometry between $L_{2}^{1}(\Omega')$ 
and $L_{2}^{1}(\Omega)$.
\end{proof}

\begin{defn}
Let $\Omega\subset\mathbb{C}$ be a simply connected domain with
non-empty boundary and let $\varphi$ be a conformal homeomorphism of
$\Omega$ onto the unit disc $\mathbb{D}$. We call the smooth positive real-valued function
$h(z)=J(z,\varphi)=|\varphi^{\prime} (z)|^2$ the universal conformal weight in $\Omega$
(or simply the conformal weight).

Recall that the Lebesgue spaces $L_{p}(\Omega,h)$ does not depend on
the choice of the conformal homeomorphism $\varphi$; i.e. the dependence is only on the conformal
structure of $\Omega$. This is a reason to call the weight $h(z)$ the (universal)
conformal weight on $\Omega$.\end{defn}

\begin{thm}
\label{thm:Weightem} Let $\Omega\subset\mathbb{C}$ be a simply
connected domain with non-empty boundary. Then the inequality
$$
\|f\mid L_{r}(\Omega,h)\|\leq K\|\nabla f\mid L_{2}(\Omega)\|
$$ 
holds for every function
$f\in C_{0}^{\infty}(\Omega)$ and for any $1\leq r<\infty$. Here $K$ is a constant depending only on $r$.
\end{thm}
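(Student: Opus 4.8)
The plan is to transfer the inequality to the unit disc by the conformal homeomorphism $\varphi:\Omega\to\mathbb{D}$ that defines the weight $h$, thereby reducing it to the classical Poincar\'e--Sobolev inequality on $\mathbb{D}$. Given $f\in C_{0}^{\infty}(\Omega)$, set $g=f\circ\varphi^{-1}$. Since $\varphi^{-1}:\mathbb{D}\to\Omega$ is conformal and hence smooth, $g$ is smooth, and its support $\varphi(\operatorname{supp}f)$ is a compact subset of $\mathbb{D}$; thus $g\in C_{0}^{\infty}(\mathbb{D})$.

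First I would rewrite the weighted left-hand side as an unweighted norm on the disc. Because $h(z)=J(z,\varphi)$ is exactly the Jacobian of $\varphi$, the change of variables $w=\varphi(z)$ gives
$$
\int_{\Omega}|f(z)|^{r}h(z)\,d\mu=\int_{\Omega}|g(\varphi(z))|^{r}J(z,\varphi)\,d\mu=\int_{\mathbb{D}}|g(w)|^{r}\,d\mu,
$$
so that $\|f\mid L_{r}(\Omega,h)\|=\|g\mid L_{r}(\mathbb{D})\|$. This is the single place where the specific choice of the conformal weight is essential: it is precisely this weight that makes the transformed norm unweighted.

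Next I would treat the right-hand side using Lemma~\ref{lem:isometry}. Applying the isometry with $\Omega'=\mathbb{D}$ to the function $g\in L_{2}^{1}(\mathbb{D})$ and noting that $g\circ\varphi=f$, we obtain $\|\nabla f\mid L_{2}(\Omega)\|=\|\nabla g\mid L_{2}(\mathbb{D})\|$. Combining the two identities, the claimed inequality is equivalent to
$$
\|g\mid L_{r}(\mathbb{D})\|\leq K\,\|\nabla g\mid L_{2}(\mathbb{D})\|,\qquad g\in C_{0}^{\infty}(\mathbb{D}).
$$

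Finally, this is the classical Poincar\'e--Sobolev inequality on the bounded domain $\mathbb{D}$. In dimension $n=2$ the Sobolev embedding $\overset{\circ}{W_{2}^{1}}(\mathbb{D})\hookrightarrow L_{r}(\mathbb{D})$ holds for every $r<\infty$ (the borderline case, in which the critical exponent is infinite), and since $g$ has compact support the full Sobolev norm is controlled by $\|\nabla g\mid L_{2}(\mathbb{D})\|$ through the Poincar\'e inequality on the disc; the resulting constant $K$ depends on $r$ only. I do not expect a genuine obstacle here, as the argument is a clean transfer; the only points needing care are the validity of the change of variables (guaranteed by $\varphi$ being a conformal diffeomorphism) and the fact that $g$ remains in $C_{0}^{\infty}(\mathbb{D})$.
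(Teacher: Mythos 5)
Your proposal is correct and follows essentially the same route as the paper's own proof: transfer $f$ to $g=f\circ\varphi^{-1}\in C_{0}^{\infty}(\mathbb{D})$, convert the weighted $L_{r}$-norm on $\Omega$ into the unweighted norm on $\mathbb{D}$ via the change of variables with $h=J(\cdot,\varphi)$, equate the gradient norms by Lemma~\ref{lem:isometry}, and invoke the classical Poincar\'e--Sobolev inequality on the disc. Your added remarks on the compactness of $\varphi(\operatorname{supp}f)$ and the subcritical exponent range in dimension two only make explicit what the paper leaves implicit.
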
 
{\bf Remark.} The constant $K$ is equal to the exact constant for the corresponding Poincar\'e-Sobolev inequality 
in the unit disc $\mathbb D\subset\mathbb C$, i.~e., for 
$$
\|f\mid L_{r}(\mathbb D)\|\leq K\|\nabla f\mid L_{2}(\mathbb D)\|,\,\,\,f\in C_{0}^{\infty}(\mathbb D).
$$ 
\begin{proof}
Let $f\in C_{0}^{\infty}(\Omega)$. By the Riemann Mapping Theorem, there exists a conformal homeomorphism
$\varphi: \Omega\to \mathbb D$. 
Then the function $g=f\circ\varphi^{-1}$
belongs to $C_{0}^{\infty}(\mathbb{D})$ and, by Lemma \ref{lem:isometry},
$\|\nabla g\mid L_{2}(\mathbb{D})\|=\|\nabla f\mid L_{2}(\Omega)\|$.

Using the Poincar\'e-Sobolev inequality for the function $g$ in the disc
$\mathbb{D}$ we infer 
\begin{multline*}
\|f\mid L_{r}(\Omega,h)\|=\biggl(\int\limits _{\Omega}|f(z)|^{r}J(z,\varphi)~d\mu\biggr)^{\frac{1}{r}}=\biggl(\int\limits _{\mathbb{D}}|f\circ\varphi^{-1}(w)|^{r}~d\mu\biggr)^{\frac{1}{r}}\\
=\|g\mid L_{r}(\mathbb{D})\|\leq K\| \nabla g\mid L_{2}(\mathbb{D})\|=K\|\nabla f\mid L_{2}(\Omega)\|.
\end{multline*}
 
\end{proof}

Now we give some examples of conformal weights.

\begin{exa}
\label{exa:plane}
Let $\Omega_{pl}=\mathbb C\setminus\overline{\mathbb D}=\{z\in \mathbb C: x^2+y^2>1\}$ be the plane without the unit disc. The diffeomorphism
$$
w=\varphi(z)=\frac{1}{z},\,\,\,z=x+iy,
$$
is conformal and maps $\Omega_{pl}$ onto the unit disc $\mathbb D$. The conformal weight is 
$$
h(z)=\frac{1}{|z^2|^2}=\frac{1}{(x^2+y^2)^2}.
$$ 
\end{exa}

\begin{exa}
\label{exa:half-plane}
Let $\Omega_h=\mathbb C_{+}=\{z\in \mathbb C: y>0\}$ be the upper half-plane. The diffeomorphism
$$
w=\varphi(z)=\frac{z-i}{z+i},\,\,\,z=x+iy,
$$
is conformal and maps $\Omega_h$ onto the unit disc $\mathbb D$. Then the conformal weight is
$$
h(z)=\frac{4}{|z+i|^4}=\frac{4}{(x^2+(y+1)^2)^2}.
$$ 
\end{exa}

\begin{exa}
\label{exa:strip}
Let $\Omega_s=\{z\in\mathbb C: -\frac{\pi}{4}<Re~ z<\frac{\pi}{4}\}$ be a strip. The diffeomorphism
$$
w=\varphi(z)=\frac{1}{i}\frac{e^{2iz}-1}{e^{2iz}+1}=\tan z,\,\,\,z=x+iy,
$$
is conformal and maps $\Omega$ onto the unit disc $\mathbb D$. Then the conformal weight is
$$
h(z)=\frac{1}{|z^2+1|^2}=\frac{1}{(x^2+y^2)^2+x^2-y^2+1}.
$$ 
\end{exa}

\begin{exa}
\label{exa:cardioid}
Let $\Omega_c$ be the interior of the cardioid: $r=\frac{1}{2}(1+\cos \theta)$. The diffeomorphism
$$
w=\varphi(z)=\sqrt{z}-1,\,\,\,z=x+iy,
$$
is conformal and maps $\Omega_c$ onto the unit disc $\mathbb D$. Then the conformal                                                                                                                                      weight 
$$
h(z)=\frac{1}{2\sqrt{|z|}}=\frac{1}{2\sqrt[4]{x^2+y^2}}.
$$ 
\end{exa}

\section{Embedding into Lebesgue Spaces with Conformal Weights}

In this section, we prove existence of compact embeddings
of the Sobolev spaces $\overset{\circ}{W_{p}^{1}}(\Omega)$ into the Lebesgue
spaces $L_{r}(\Omega,h)$ with the~(universal) conformal weights $h$.
We begin with a general fact about Sobolev spaces $\overset{\circ}{W_{2}^{1}}(\Omega)$.
\begin{thm}
\label{thm:Comp} Let $\varphi:\mathbb{D}\to\Omega$ be a conformal
homeomorphism. Then the composition operator
\[
\varphi^{\ast}:\overset{\circ}{W_{2}^{1}}(\Omega)\to\overset{\circ}{W_{2}^{1}}(\mathbb{D})
\]
is bounded. \end{thm}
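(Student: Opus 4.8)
The plan is to reduce everything to the two facts already available in the text: the conformal invariance of the Dirichlet integral (Lemma \ref{lem:isometry}) and the weighted Poincar\'e--Sobolev inequality of Theorem \ref{thm:Weightem}. The point to keep in mind is that the homogeneous seminorm $\|\nabla\cdot\mid L_{2}\|$ is conformally invariant, whereas the full norm of $W_{2}^{1}$ also contains the term $\|\cdot\mid L_{2}\|$, which is \emph{not} conformally invariant and therefore must be controlled by a separate argument.

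First I would argue on the dense subset $C_{0}^{\infty}(\Omega)$. Fix $f\in C_{0}^{\infty}(\Omega)$ and set $g=\varphi^{\ast}(f)=f\circ\varphi$. Since $\varphi:\mathbb{D}\to\Omega$ is conformal, it is in particular a smooth homeomorphism, so $g$ is smooth and its support equals $\varphi^{-1}(\mathrm{supp}\,f)$, a compact subset of $\mathbb{D}$. Hence $g\in C_{0}^{\infty}(\mathbb{D})\subset\overset{\circ}{W_{2}^{1}}(\mathbb{D})$, and this is exactly the mechanism that keeps the image inside the space with zero boundary data.

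Next I would estimate the two parts of $\|g\mid W_{2}^{1}(\mathbb{D})\|$ separately. For the gradient part, Lemma \ref{lem:isometry} applied to $\varphi:\mathbb{D}\to\Omega$ gives the exact equality $\|\nabla g\mid L_{2}(\mathbb{D})\|=\|\nabla f\mid L_{2}(\Omega)\|$. For the function part, the change of variables $w=\varphi^{-1}(z)$ converts $\|g\mid L_{2}(\mathbb{D})\|$ into $\|f\mid L_{2}(\Omega,h)\|$, where $h(z)=J(z,\varphi^{-1})$ is precisely the conformal weight; Theorem \ref{thm:Weightem} with $r=2$ then yields $\|g\mid L_{2}(\mathbb{D})\|=\|f\mid L_{2}(\Omega,h)\|\le K\|\nabla f\mid L_{2}(\Omega)\|$. (Equivalently, one may invoke the Poincar\'e inequality on $\mathbb{D}$ directly for $g\in C_{0}^{\infty}(\mathbb{D})$; the two routes coincide, since Theorem \ref{thm:Weightem} is itself proved from that inequality.) Adding the two estimates gives
$$
\|g\mid W_{2}^{1}(\mathbb{D})\|\le (K+1)\,\|\nabla f\mid L_{2}(\Omega)\|\le (K+1)\,\|f\mid W_{2}^{1}(\Omega)\|.
$$

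Finally I would pass from $C_{0}^{\infty}(\Omega)$ to all of $\overset{\circ}{W_{2}^{1}}(\Omega)$ by density. The displayed bound shows that $\varphi^{\ast}$ is bounded on the dense subset, so it extends uniquely to a bounded operator on $\overset{\circ}{W_{2}^{1}}(\Omega)$; since each $g$ lies in the closed subspace $\overset{\circ}{W_{2}^{1}}(\mathbb{D})$, every limit does too, and the operator indeed maps into $\overset{\circ}{W_{2}^{1}}(\mathbb{D})$. I expect the only real care to be in the identification step: to check that the abstract extension genuinely is the composition $f\circ\varphi$, one takes $f_{n}\to f$ in $W_{2}^{1}(\Omega)$ with $f_{n}\in C_{0}^{\infty}(\Omega)$, passes to a subsequence converging almost everywhere, and uses that the conformal diffeomorphism $\varphi$ preserves sets of measure zero, so that $f_{n}\circ\varphi\to f\circ\varphi$ almost everywhere and the limit is exactly $\varphi^{\ast}(f)$. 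This identification, rather than any hard estimate, is the main (and only mild) obstacle.
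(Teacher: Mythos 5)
Your proof is correct and follows essentially the same route as the paper's: conformal invariance of the Dirichlet integral (Lemma \ref{lem:isometry}) for the gradient term, the Poincar\'e inequality on $\mathbb{D}$ (equivalently, Theorem \ref{thm:Weightem} with $r=2$, which as you note is proved from that inequality) to control the $L_2$ term, and density of $C_{0}^{\infty}(\Omega)$ to conclude. Your direct observation that $f\circ\varphi\in C_{0}^{\infty}(\mathbb{D})$ in fact streamlines the paper's membership step, which instead uses a change-of-variables estimate with an $f$-dependent constant $Q(\varphi,f)$, and your identification of the abstract extension with the composition operator is a careful spelling-out of the density argument the paper only cites.
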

\begin{proof}
Let  $f\in\overset{\circ}{W_{2}^{1}}(\Omega)$ be a smooth function. We first prove that $\varphi^{\ast}f$ 
belongs to $\overset{\circ}{W_{2}^{1}}(\mathbb{D})$.

By the definition of the  composition operators $\varphi^{\ast}f=f\circ\varphi$ is a smooth function defined everywhere in $\Omega$ and by Lemma \ref{lem:isometry},
we have the equality
$$
\|\nabla (\varphi^{\ast}f)|L_{2}(\mathbb{D})\|=\| \nabla f|L_{2}(\Omega)\|.
$$

Since the support $\mathrm{supp}(f)$ of~$f$ is compact, its inverse image $\varphi^{-1}(\mathrm{supp}(f))$
is also compact. Hence, we obtain the following estimate of $\|\varphi^{\ast}f|L_{2}(\mathbb{D})\|$:
\begin{multline*}
\|\varphi^{\ast}f|L_{2}(\mathbb{D})\|=\biggl(\int\limits _{\varphi^{-1}(\mathrm{supp}(f))}|f\circ\varphi|^{2}~d\mu\biggr)^{\frac{1}{2}}\\
=\biggl(\int\limits _{\varphi^{-1}(\mathrm{supp}(f))}|f\circ\varphi|^{2}J(z,\varphi)\frac{1}{J(z,\varphi)}~d\mu\biggr)^{\frac{1}{2}}\\
\leq\max\limits _{z\in\varphi^{-1}(\mathrm{supp}(f))}\biggl(\frac{1}{J(z,\varphi)^{\frac{1}{2}}}\biggr)\biggl(\int\limits _{\varphi^{-1}(\mathrm{supp}(f))}|f\circ\varphi|^{2}J(z,\varphi)~d\mu\biggr)^{\frac{1}{2}}\\
\leq\max\limits _{z\in\varphi^{-1}(\mathrm{supp}(f))}\biggl(\frac{1}{J(z,\varphi)^{\frac{1}{2}}}\biggr)\biggl(\int\limits _{\mathrm{supp}(f)}|f|^{2}(w)~d\mu\biggr)^{\frac{1}{2}}
\end{multline*}

Using the notation
$$
Q(\varphi,f):=\max\limits _{z\in\varphi^{-1}(\mathrm{supp}(f))}\biggl(\frac{1}{J(z,\varphi)^{\frac{1}{2}}}\biggr).
$$
we conclude that
$$
\|\varphi^{\ast}f|L_{2}(\mathbb{D})\| \leq Q(\varphi,f) \|f|L_{2}(\Omega)\|.
$$
So, the composition $\varphi^{\ast}f$ belongs to the Sobolev space
$\overset{\circ}{W_{2}^{1}}(\mathbb{D})$.

Let us prove that the composition operator $\varphi^{\ast}$ is bounded.

By the Poincar\'e inequality 
\[
\|\varphi^{\ast}f|L_{2}(\mathbb{D})\|\leq C\|\nabla (\varphi^{\ast}f)|L_{2}(\mathbb{D})\|
\]
for every function $g=\varphi^{\ast}f\in \overset{\circ}{W_{2}^{1}}(\mathbb{D})$, here the constant $C$ does not depends on $f$.
 
Hence, 
\begin{multline*}
\|\varphi^{\ast}f|\overset{\circ}{W_{2}^{1}}(\mathbb{D})\|=\|\varphi^{\ast}f|L_{2}(\mathbb{D})\|+\|\nabla(\varphi^{\ast}f)|L_{2}(\mathbb{D})\|\\
\leq C\|\nabla(\varphi^{\ast}f)|L_{2}(\mathbb{D})\|+
\|\nabla(\varphi^{\ast}f)|L_{2}(\mathbb{D})\|\\
=(C+1)\| \nabla f|L_{2}(\Omega)\|\leq (C+1)\|f|\overset{\circ}{W_{2}^{1}}(\Omega)\|
\end{multline*}
for every smooth function  $f\in \overset{\circ}{W_{2}^{1}}(\Omega)$. 

Using the density of smooth functions with compact supports in $\overset{\circ}{W_{2}^{1}}(\Omega)$,
we can extend the last inequality to an arbitrary function $f\in \overset{\circ}{W_{2}^{1}}(\Omega)$ 
(see, for example, \cite{VU1, GMU}). 
This means that $\varphi^{\ast}f\in\overset{\circ}{ W^1_2}(\mathbb D)$ and the composition operator 
\[
\varphi^{\ast}:\overset{\circ}{W_{2}^{1}}(\Omega)\to\overset{\circ}{W_{2}^{1}}(\mathbb{D})
\]
 is bounded. 
\end{proof}

The previous theorem leads to compactness of the Sobolev type embeddings
 \\ $j_r:\overset{\circ}{W_{2}^{1}}(\Omega)\hookrightarrow L_{r}(\Omega,h)$ in the case 
of the universal conformal weight $h$:

\begin{thm}
Let $\Omega\subset\mathbb{C}$ be a simply connected domain with
non-empty boundary. Then the embedding operator 
\[
j_r:\overset{\circ}{W_{2}^{1}}(\Omega)\hookrightarrow L_{r}(\Omega,h)
\]
is compact for any $1\leq r<\infty$. Here $h$ is the universal conformal
weight.\end{thm}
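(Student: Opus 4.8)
The plan is to factor the embedding $j_{r}$ through the unit disc by means of the conformal change of variables, and then to reduce compactness to the Rellich--Kondrachov theorem on $\mathbb{D}$. Fix a conformal homeomorphism $\varphi:\mathbb{D}\to\Omega$ (the inverse of the Riemann map), so that the universal conformal weight is $h(z)=J(z,\varphi^{-1})=|(\varphi^{-1})'(z)|^{2}$. The first observation is that the composition operator $\varphi^{\ast}f=f\circ\varphi$ is, for the \emph{weighted} target norm, an isometry: by the conformal change of variables $w\mapsto\varphi(w)$,
$$
\|f\mid L_{r}(\Omega,h)\|=\biggl(\int\limits_{\Omega}|f(z)|^{r}J(z,\varphi^{-1})\,d\mu\biggr)^{1/r}=\biggl(\int\limits_{\mathbb{D}}|f(\varphi(w))|^{r}\,d\mu\biggr)^{1/r}=\|\varphi^{\ast}f\mid L_{r}(\mathbb{D})\|.
$$
Thus $\varphi^{\ast}$ maps $L_{r}(\Omega,h)$ isometrically \emph{onto} $L_{r}(\mathbb{D})$; I denote its inverse by $\Phi:L_{r}(\mathbb{D})\to L_{r}(\Omega,h)$, $\Phi g=g\circ\varphi^{-1}$, which is again a bounded isometry. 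Together with Theorem \ref{thm:Weightem}, this already shows that $j_{r}$ is well defined and bounded.

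Next I would write $j_{r}$ as a composition of three maps,
$$
j_{r}=\Phi\circ i_{r}\circ\varphi^{\ast},\qquad \overset{\circ}{W_{2}^{1}}(\Omega)\xrightarrow{\ \varphi^{\ast}\ }\overset{\circ}{W_{2}^{1}}(\mathbb{D})\xrightarrow{\ i_{r}\ }L_{r}(\mathbb{D})\xrightarrow{\ \Phi\ }L_{r}(\Omega,h),
$$
where $i_{r}$ is the identity embedding. Indeed $\Phi(i_{r}(\varphi^{\ast}f))=(f\circ\varphi)\circ\varphi^{-1}=f$, so the composition genuinely reproduces $j_{r}$. Here $\varphi^{\ast}$ is bounded by Theorem \ref{thm:Comp}, and $\Phi$ is a bounded isometry by the previous step. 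Consequently $j_{r}$ is compact as soon as the middle arrow $i_{r}$ is compact, since a composition in which one factor is compact and the others are bounded is compact.

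The crux is therefore the compactness of the embedding $i_{r}:\overset{\circ}{W_{2}^{1}}(\mathbb{D})\hookrightarrow L_{r}(\mathbb{D})$ for \emph{every} finite $r$. I expect this to be the main obstacle, because $n=p=2$ is exactly the borderline Sobolev exponent: one cannot simply quote the subcritical Rellich theorem with a single fixed Sobolev conjugate, since there is no finite conjugate in the critical case. However, $\mathbb{D}$ is a bounded smooth (in particular Lipschitz) domain, and in the limiting case $p=n$ the Sobolev embedding $W_{2}^{1}(\mathbb{D})\hookrightarrow L_{r}(\mathbb{D})$ holds and is compact for all $1\le r<\infty$; for functions of $\overset{\circ}{W_{2}^{1}}(\mathbb{D})$, which have zero boundary values, this is the standard Rellich compactness on a bounded plane domain applied for each finite $r$. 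Granting this input, the bounded$\,\circ\,$compact$\,\circ\,$isometry composition is compact, and the theorem follows for all $1\le r<\infty$.
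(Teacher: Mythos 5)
Your proposal is correct and follows essentially the same route as the paper: both factor $j_r$ through the unit disc as a bounded composition operator $\overset{\circ}{W_{2}^{1}}(\Omega)\to\overset{\circ}{W_{2}^{1}}(\mathbb{D})$ (Theorem \ref{thm:Comp}), followed by the compact embedding $i_r:\overset{\circ}{W_{2}^{1}}(\mathbb{D})\hookrightarrow L_{r}(\mathbb{D})$, followed by the isometry $L_{r}(\mathbb{D})\to L_{r}(\Omega,h)$ given by the conformal change of variables. Your only addition is the explicit justification of compactness of $i_r$ in the borderline case $p=n=2$, which the paper simply cites as the classical Sobolev embedding theorem for the disc.
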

\begin{proof}
By the Riemann Mapping Theorem, there exists a conformal homeomorphism
$w=\varphi(z):\Omega\to\mathbb{D}$. The inverse mapping is also conformal
and by, the previous theorem, there exists a constant $0<K<\infty$ such that
\[
\|f\circ\varphi^{-1}\mid\overset{\circ}{W_{2}^{1}}(\mathbb{D})\|\leq K\|f\mid\overset{\circ}{W_{2}^{1}}(\Omega)\|
\]
 for any function $f\in\overset{\circ}{W_{2}^{1}}(\Omega)$.

By the classical Sobolev embedding theorem for the unit disc $\mathbb{D}$, the embedding operators 
$i_r:\overset{\circ}{W_{2}^{1}}(\mathbb{D})\hookrightarrow L_{r}(\mathbb{D})$ are bounded and compact for any $1\leq r<\infty$.

Using the change of variable formula and the boundedness of $i_r$, we obtain
\begin{multline*}
\|f\mid L_{r}(\Omega,h)\|=\biggl(\int\limits _{\Omega}|f(z)|^{r}J(z,\varphi)~d\mu\biggr)^{\frac{1}{r}}=\biggl(\int\limits _{\mathbb{D}}|f\circ\varphi^{-1}(w)|^{r}~d\mu\biggr)^{\frac{1}{r}}\\
=\|f\circ\varphi^{-1}\mid L_{r}(\mathbb{D})\|\leq \|i_r \| \|f\circ\varphi^{-1}\mid\overset{\circ}{W_{2}^{1}}(\mathbb{D})\|\leq \|i_r \|\cdot K\cdot\|f\mid\overset{\circ}{W_{2}^{1}}(\Omega)\|.
\end{multline*}

Therefore, the embedding operators $j_r:\overset{\circ}{W_{2}^{1}}(\Omega)\hookrightarrow L_{r}(\Omega,h)$ 
are compact as compositions of the compact embedding operators $i_r:\overset{\circ}{W_{2}^{1}}(\mathbb{D})\hookrightarrow L_{r}(\mathbb{D})$
and bounded composition operators. 
\end{proof}
The next theorem about composition operators was formulated and proved in \cite{U1,VU1} for Sobolev homeomorphisms. 
In the case of conformal homeomorphisms, this statement is much simpler. Here we reproduce a comparatively simple proof 
for the conformal case that enables us to avoid the main technicalities. As preliminary information, this proof uses
only the result about the existence of a bounded monotone countably additive function \cite{U1, VU1}:

\begin{thm}\label{thm:AddFun}
Suppose that a mapping $\varphi : \Omega\to \Omega^{\prime}$ induces a bounded composition operator
$$
\varphi^{\ast} : L^1_p(\Omega^{\prime})\to L^1_q(\Omega),\quad 1\leq q<p\leq\infty.
$$
Then
$$
\Phi(A^{\prime})=\sup\limits_{f\in L_{p}^{1}(A^{\prime})\cap C_0(A')}
\Biggl(
\frac{\bigl\|\varphi^{\ast} f\mid {L}_{q}^{1}(\Omega)\bigr\|}
{\bigl\|f\mid L_{p}^{1}(A^{\prime})\bigr\|}
\Biggr)^{\kappa},
$$
where the number $\kappa$ is defined by $1/\kappa = 1/q - 1/p$, 
is a bounded monotone countably additive function defined on
open bounded subsets $A^{\prime}\subset \Omega^{\prime}$.
\end{thm}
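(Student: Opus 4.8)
The plan is to read off all three required properties—boundedness, monotonicity, and countable additivity—from the single observation that $\Phi(A')^{1/\kappa}$ is precisely the operator norm of $\varphi^{\ast}$ restricted to test functions supported in $A'$. Boundedness is immediate: for any $f\in L_p^1(A')\cap C_0(A')$, extension by zero produces an element of $L_p^1(\Omega')$ with $\|f\mid L_p^1(\Omega')\|=\|f\mid L_p^1(A')\|$ (the gradient is supported in $A'$), so the hypothesis $\|\varphi^{\ast}f\mid L_q^1(\Omega)\|\le K\|f\mid L_p^1(\Omega')\|$ forces $\Phi(A')\le K^{\kappa}$ uniformly in $A'$, where $K=\|\varphi^{\ast}\|$. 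Monotonicity is equally direct: if $A_1'\subset A_2'$, every admissible $f$ for $A_1'$ is admissible for $A_2'$ (its compact support lies in $A_2'$ and the ratio is unchanged), so the supremum defining $\Phi(A_2')$ runs over a larger family and $\Phi(A_1')\le\Phi(A_2')$.

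The heart of the argument is finite additivity for disjoint open sets $A_1',A_2'$. Given $f$ with compact support $K\subset A_1'\cup A_2'$, I would split $f=f_1+f_2$ with $f_i=f\cdot\chi_{A_i'}$; since $A_1',A_2'$ are disjoint and open, each $K\cap A_i'$ is simultaneously relatively open and closed in $K$, hence compact, so $f_i\in L_p^1(A_i')\cap C_0(A_i')$. Because $f_1,f_2$ have disjoint supports and $\varphi$ is a homeomorphism (so $\varphi^{\ast}f_1,\varphi^{\ast}f_2$ also have disjoint supports), the seminorms split as $\|f\mid L_p^1\|^p=b_1^p+b_2^p$ and $\|\varphi^{\ast}f\mid L_q^1\|^q=a_1^q+a_2^q$, where $a_i=\|\varphi^{\ast}f_i\mid L_q^1(\Omega)\|$ and $b_i=\|f_i\mid L_p^1(A_i')\|$. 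Setting $c_i=\Phi(A_i')^{1/\kappa}$ so that $a_i\le c_ib_i$, the whole problem reduces to the elementary extremal identity
$$
\sup_{b_1,b_2>0}\frac{(c_1^{q}b_1^{q}+c_2^{q}b_2^{q})^{1/q}}{(b_1^{p}+b_2^{p})^{1/p}}=\bigl(c_1^{\kappa}+c_2^{\kappa}\bigr)^{1/\kappa},
$$
which follows from H\"older's inequality with conjugate exponents $p/q$ and $p/(p-q)$ (note $q\cdot p/(p-q)=\kappa$ and $(p-q)/(pq)=1/\kappa$), with equality attained when $b_i^{p}\propto c_i^{\kappa}$. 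The upper bound yields subadditivity $\Phi(A_1'\cup A_2')\le\Phi(A_1')+\Phi(A_2')$ at once; the reverse inequality I would obtain by choosing, for each $\varepsilon>0$, near-extremal $f_i$ with $a_i/b_i>c_i-\varepsilon$, rescaling them by constants $\lambda_i$ so that $b_i$ matches the optimal proportion $b_i\propto c_i^{\kappa/p}$, and forming $f=\lambda_1f_1+\lambda_2f_2$, whose support lies in $A_1'\cup A_2'$ by disjointness. This gives $\Phi(A_1'\cup A_2')=\Phi(A_1')+\Phi(A_2')$, and the same reasoning gives finite additivity for any finite disjoint family.

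To pass from finite to countable additivity for $A'=\bigsqcup_{i=1}^{\infty}A_i'$, one inequality is free: monotonicity together with finite additivity gives $\Phi(A')\ge\Phi\bigl(\bigsqcup_{i=1}^{N}A_i'\bigr)=\sum_{i=1}^{N}\Phi(A_i')$ for every $N$, hence $\Phi(A')\ge\sum_{i=1}^{\infty}\Phi(A_i')$. For the reverse inequality I would exploit compactness of supports: any admissible $f$ has compact support $K\subset\bigsqcup_i A_i'$, and the sets $K\cap A_i'$ partition $K$ into relatively clopen pieces, so only finitely many are nonempty. Thus $f$ is supported in a finite subunion $\bigsqcup_{i\in F}A_i'$, and finite additivity bounds the ratio for $f$ by $\bigl(\sum_{i\in F}\Phi(A_i')\bigr)^{1/\kappa}\le\bigl(\sum_{i=1}^{\infty}\Phi(A_i')\bigr)^{1/\kappa}$; taking the supremum over $f$ gives $\Phi(A')\le\sum_{i=1}^{\infty}\Phi(A_i')$. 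The main obstacle is the finite-additivity step: the subadditive direction is a clean application of the reverse H\"older identity, but the superadditive direction needs the test-function construction above, where the disjointness of the open sets is exactly what lets the rescaled pieces be combined without their supports, or the supports of their images under $\varphi$, overlapping.
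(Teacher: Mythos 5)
The paper contains no proof of this statement to compare against: Theorem \ref{thm:AddFun} is imported verbatim from \cite{U1,VU1} as ``preliminary information,'' and the authors explicitly use it as a black box. Judged on its own merits, your proof is correct, and it is in essence the standard argument behind the cited result. The three pillars all hold up: (a) identifying $\Phi(A')^{1/\kappa}$ as the norm of $\varphi^{\ast}$ restricted to compactly supported test functions in $A'$ gives boundedness (via extension by zero, which preserves the seminorm) and monotonicity immediately; (b) for finite additivity, disjointly supported functions have disjointly supported compositions --- note that this needs only $\varphi^{-1}(S_1)\cap\varphi^{-1}(S_2)=\varphi^{-1}(S_1\cap S_2)=\emptyset$, so your appeal to $\varphi$ being a homeomorphism is unnecessary, which is fortunate since the statement assumes only ``a mapping''; the seminorms then split with powers $p$ and $q$, and your H\"older extremal identity $\sup_{b_1,b_2>0}\bigl(c_1^{q}b_1^{q}+c_2^{q}b_2^{q}\bigr)^{1/q}\bigl(b_1^{p}+b_2^{p}\bigr)^{-1/p}=\bigl(c_1^{\kappa}+c_2^{\kappa}\bigr)^{1/\kappa}$ is exactly right (the exponent arithmetic $q\cdot\frac{p}{p-q}=\kappa$ checks out), with the equality case correctly exploited for the superadditive direction; (c) countable additivity reduces to the finite case because a compact support inside a disjoint union of open sets is covered by finitely many of them (your clopen-pieces remark is a detour; the finite subcover plus pairwise disjointness already forces all but finitely many intersections to be empty). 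Two loose ends worth tidying: first, the statement allows $p=\infty$, where $\kappa=q$ and your H\"older exponents $p/q$, $p/(p-q)$ degenerate; there the denominator splits as $\max(b_1,b_2)$ rather than $(b_1^p+b_2^p)^{1/p}$, and the identity follows from the trivial bound $\sum_i c_i^{q}b_i^{q}\leq\bigl(\sum_i c_i^{q}\bigr)\max_i b_i^{q}$ with equality at $b_1=b_2$. Second, the near-extremizer construction in the superadditive step tacitly assumes $\Phi(A_i')>0$; when some $\Phi(A_i')=0$ the inequality is already supplied by monotonicity. Neither point affects the substance of the argument.
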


Using this ``localization principle'' for composition operators, we prove

\begin{thm}
\label{thm:CompNecSuf} Let $\Omega,\Omega'\subset\mathbb{C}$
be plane domains. A conformal homeomorphism $w=\varphi(z):\Omega\to\Omega'$
induces a bounded composition operator $\varphi^{\ast}:L_{p}^{1}(\Omega')\to L_{q}^{1}(\Omega)$,
$1\leq q<p<\infty$, if and only if 
\[
\biggl(\int\limits _{\Omega}|\varphi'(z)|^{\frac{(p-2)q}{p-q}}~d\mu\biggr)^{\frac{p-q}{pq}}=K<+\infty.
\]
\end{thm}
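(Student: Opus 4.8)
The plan is to treat the two implications separately, exploiting throughout the conformal identities $|D\varphi(z)| = |\varphi'(z)|$ and $J(z,\varphi) = |\varphi'(z)|^2$, which reduce the $p$-dilatation to the scalar weight $|\varphi'(z)|^{p-2}$ as already noted after Theorem \ref{CompTh}. For brevity write $\psi = \varphi^{-1} : \Omega' \to \Omega$ and $\kappa = pq/(p-q)$, so that $1/\kappa = 1/q - 1/p$.

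For sufficiency I would start from a smooth $f \in L_p^1(\Omega')$ and set $g = \varphi^* f = f\circ\varphi$. Conformality gives $|\nabla g(z)| = |\nabla f(\varphi(z))|\,|\varphi'(z)|$ a.e., hence
$$\|\nabla g \mid L_q(\Omega)\|^q = \int\limits_\Omega |\nabla f(\varphi(z))|^q\,|\varphi'(z)|^q\,d\mu.$$
The key step is to split $|\varphi'|^q = |\varphi'|^{2q/p}\cdot|\varphi'|^{q(p-2)/p}$ and apply Hölder's inequality with the conjugate exponents $p/q$ and $p/(p-q)$. The first factor becomes $\bigl(\int_\Omega |\nabla f(\varphi(z))|^p |\varphi'(z)|^2\,d\mu\bigr)^{q/p}$, which by the change of variables $w = \varphi(z)$ (with $d\mu(w) = |\varphi'(z)|^2\,d\mu(z)$) equals $\|\nabla f \mid L_p(\Omega')\|^q$; the second factor is exactly $\bigl(\int_\Omega |\varphi'|^{q(p-2)/(p-q)}\,d\mu\bigr)^{(p-q)/p} = K^q$. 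Thus $\|\nabla g \mid L_q(\Omega)\| \leq K\,\|\nabla f \mid L_p(\Omega')\|$, and approximating an arbitrary $f \in L_p^1(\Omega')$ by smooth functions extends the bound, giving boundedness of $\varphi^*$ with norm at most $K$.

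For necessity I would invoke Theorem \ref{thm:AddFun}: boundedness of $\varphi^*$ produces a bounded monotone countably additive set function $\Phi$ on open bounded subsets $A' \subset \Omega'$, with the exponent $\kappa = pq/(p-q)$. Such a set function is differentiable a.e., its derivative $\Phi'(w) = \lim_{\rho\to0}\Phi(B(w,\rho))/|B(w,\rho)|$ exists for almost every $w$, and $\int_{\Omega'}\Phi'\,d\mu \leq \sup_{A'}\Phi(A') < \infty$. To extract the weight I would estimate $\Phi'$ from below using concrete test functions: for $w_0 \in \Omega'$ and small $\rho$, the cone $f_\rho(w) = \max\{0,\, \rho - |w - w_0|\}$ (or a smoothing of it in $C_0$) satisfies $\|f_\rho \mid L_p^1\|^p = \pi\rho^2$ while $\|\varphi^* f_\rho \mid L_q^1\|^q = \int_{\psi(B(w_0,\rho))}|\varphi'|^q\,d\mu$. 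Since $\varphi$ is holomorphic with continuous nonvanishing derivative, as $\rho\to0$ one has $\psi(B(w_0,\rho))$ shrinking to $z_0 = \psi(w_0)$ with area $\sim |\varphi'(z_0)|^{-2}\pi\rho^2$ and $\int_{\psi(B)}|\varphi'|^q \sim |\varphi'(z_0)|^{q-2}\pi\rho^2$. Substituting into $\Phi(B(w_0,\rho)) \geq \bigl(\|\varphi^* f_\rho\|/\|f_\rho\|\bigr)^\kappa$ and using $\kappa/q - \kappa/p = 1$ yields $\Phi'(w_0) \gtrsim |\varphi'(z_0)|^{p(q-2)/(p-q)}$ for a.e. $w_0$. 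Integrating and changing variables $w = \varphi(z)$ (so the density picks up the factor $|\varphi'(z)|^2$) gives, after the arithmetic $p(q-2)/(p-q) + 2 = q(p-2)/(p-q)$,
$$\int\limits_\Omega |\varphi'(z)|^{\frac{q(p-2)}{p-q}}\,d\mu \lesssim \int\limits_{\Omega'}\Phi'\,d\mu < \infty,$$
which is the asserted finiteness of $K$.

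The main obstacle is the necessity direction, specifically the lower bound $\Phi'(w_0) \gtrsim |\varphi'(z_0)|^{p(q-2)/(p-q)}$: it requires both the a.e. differentiation theorem for bounded monotone countably additive set functions (the sole external input, from \cite{U1, VU1}) and a careful asymptotic analysis of the test-function ratio as the balls concentrate. Here the smoothness of conformal maps is what keeps the local analysis clean, since $\varphi$ is asymptotically a rotation-scaling by $\varphi'(z_0)$ near each point, so the areas and integrals have exact leading-order behavior and the otherwise delicate covering and approximation arguments of the general Sobolev-homeomorphism case are avoided. The remaining verifications, namely conjugacy of the Hölder exponents, validity of the change of variables, and the exponent bookkeeping, are routine.
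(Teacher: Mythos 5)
Your proposal is correct and follows essentially the same route as the paper: sufficiency via the identical H\"older split $|\varphi'|^{q}=J(z,\varphi)^{q/p}|\varphi'|^{(p-2)q/p}$ with exponents $p/q$ and $p/(p-q)$, and necessity via Theorem \ref{thm:AddFun} together with shrinking-ball differentiation of the set function $\Phi$ and a change of variables. The only differences are cosmetic: you use cone test functions and the continuity of the nonvanishing $\varphi'$ to get pointwise asymptotics, where the paper uses smooth cutoffs of $\mathrm{Re}(w-w_0)$, $\mathrm{Im}(w-w_0)$ and Lebesgue differentiation of the resulting integral inequality (and, like the paper, you should evaluate $\Phi$ on a slightly larger ball, e.g. $B(w_0,2\rho)$, since the cone's support is not compactly contained in $B(w_0,\rho)$ --- a harmless constant-factor adjustment you already anticipate).
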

\begin{proof}
\noun{Necessity}. Suppose that the composition operator 
\[
\varphi^{\ast}:L_{p}^{1}(\Omega')\to L_{q}^{1}(\Omega),\,\,\,1\leq q<p<\infty,
\]
is bounded. Then by Theorem \ref{thm:AddFun} there exists a bounded monotone countably
additive function $\Phi$ defined on open bounded subsets 
$A'\subset\Omega'$ such that, for every function $f\in L_{p}^{1}(\Omega')\cap C_{0}(A')$,
\begin{equation}
\|\varphi^{\ast}(f)\mid L_{q}^{1}(\Omega)\|\leq\bigl(\Phi(A')\bigr)^{\frac{p-q}{pq}}\|f\mid L_{p}^{1}(\Omega)\|.
\end{equation}
Fix a cut function $\eta\in C_{0}^{\infty}(\mathbb{C})$ which
is equal to one on the set $\{w\in\mathbb C: |w|<1\}$ and is equal to zero outside of the set $\{w\in\mathbb C: |w|<2\}$.
Inserting the functions 
\[
f_{R}(w)=Re(w-w_{0})\eta\biggl(\frac{w-w_{0}}{r}\biggr),\,\,\, w_{0}\in\Omega',
\]
and
\[
f_{I}(w)=Im(w-w_{0})\eta\biggl(\frac{w-w_{0}}{r}\biggr),\,\,\, w_{0}\in\Omega',
\]
in this inequality, we obtain 
\[
\biggl(\int\limits _{\varphi^{-1}(B(w_{0},r))}|\varphi'(z)|^{q}~d\mu\biggr)^{\frac{1}{q}}\leq C\Phi(B(w_{0},2r))^{\frac{p-q}{pq}}|B(w_0,r)|^{\frac{1}{p}}
\]
when $B(w_{0},r)=\{w\in\mathbb C: |w-w_0|<r\}$, $B(w_0,2r)\subset\Omega'$.

By the change of variable formula, using the equality $|\varphi'(z)|^{2}=J(z,\varphi)$,
we infer 
\begin{multline*}
\int\limits _{\varphi^{-1}(B(w_{0},r))}|\varphi'(z)|^{q}~d\mu=\int\limits _{\varphi^{-1}(B(w_{0},r))}|\varphi'(z)|^{q-2}J(z,\varphi)~d\mu\\
=\int\limits _{B(w_{0},r)}|\varphi'(\varphi^{-1}(w))|^{q-2}~d\mu.
\end{multline*}
 Hence, 
\[
\biggl(\int\limits _{B(w_{0},r)}|\varphi'(\varphi^{-1}(w))|^{q-2}~d\mu\biggr)^{\frac{1}{q}}\leq C\Phi(B(w_{0},2r))^{\frac{p-q}{pq}}|B(w_0,r)|^{\frac{1}{p}}
\]
and we have 
\[
\frac{1}{|B(w_{0},r)|}\int\limits _{B(w_{0},r)}|\varphi'(\varphi^{-1}(w))|^{q-2}~d\mu\leq C\biggl(\frac{\Phi(B(w_{0},2r)}{|B(w_{0},r)|}\biggr)^{\frac{p-q}{p}}.
\]
Passing to the limit as $r\to0$, we get 
\[
|\varphi'(\varphi^{-1}(w))|^{q-2}\leq C\bigl(\Phi'(w)\bigr)^{\frac{p-q}{p}}\,\,\,\text{for almost all}\,\,\, w\in\Omega'.
\]
Hence, for every open bounded subset $V\subset\Omega'$
$$
\int\limits _{V}|\varphi'(\varphi^{-1}(w))|^{\frac{p(q-2)}{p-q}}~d\mu
\leq C\int\limits _{V}\Phi'(w)~d\mu\leq C\Phi(V)\leq C\|\varphi^{\ast}\|^{\frac{pq}{p-q}}.
$$
Since $V$ is an arbitrary subset of $\Omega'$ we have that
$$
\int\limits _{\Omega'}|\varphi'(\varphi^{-1}(w))|^{\frac{p(q-2)}{p-q}}~d\mu\leq C\|\varphi^{\ast}\|^{\frac{pq}{p-q}}.
$$

Therefore
\begin{multline*}
\int\limits _{\Omega}|\varphi'(z)|^{\frac{(p-2)q}{p-q}}~d\mu=\int\limits _{\Omega}|\varphi'(z)|^{\frac{p(q-2)}{p-q}}J(z,\varphi)~d\mu\\
=\int\limits _{\Omega'}|\varphi'(\varphi^{-1}(w))|^{\frac{p(q-2)}{p-q}}~d\mu
\leq C\|\varphi^{\ast}\|^{\frac{pq}{p-q}}.
\end{multline*}

\noun{Sufficiency.} Let $f\in L_{p}^{1}(\Omega')$. Then, since $\varphi$
is a smooth mapping, the composition $\varphi^{\ast}(f)=f\circ\varphi$
is defined almost everywhere and weakly differentiable in $\Omega$. Hence, using the equality $|\varphi'(z)|^{2}=J(z,\varphi)$,
we obtain 
\begin{multline*}
\|\varphi^{\ast}(f)\mid L_{q}^{1}(\Omega)\|=\biggl(\int\limits _{\Omega}|\nabla(f\circ\varphi(z))|^{q}~d\mu\biggr)^{\frac{1}{q}}=\biggl(\int\limits _{\Omega}|\nabla f|^{q}(\varphi(z))|\varphi'(z)|^{q}~d\mu\biggr)^{\frac{1}{q}}\\
=\biggl(\int\limits _{\Omega}|\nabla f|^{q}(\varphi(z))J(z,\varphi)^{\frac{q}{p}}|\varphi'(z)|^{\frac{(p-2)q}{p}}~d\mu\biggr)^{\frac{1}{q}}.
\end{multline*}
Thus, using H\"older's inequality, we have 
\begin{multline*}
\|\varphi^{\ast}(f)\mid L_{q}^{1}(\Omega)\|=\biggl(\int\limits _{\Omega}|\nabla f|^{q}(\varphi(z))J(z,\varphi)^{\frac{q}{p}}|\varphi'(z)|^{\frac{(p-2)q}{p}}~d\mu\biggr)^{\frac{1}{q}}\\
\leq\biggl(\int\limits _{\Omega}|\nabla f|^{p}(\varphi(z))J(z,\varphi)~d\mu\biggr)^{\frac{1}{p}}\biggl(\int\limits _{\Omega}|\varphi'(z)|^{\frac{(p-2)q}{p-q}}~d\mu\biggr)^{\frac{p-q}{pq}}=K\|f\mid L_{p}^{1}(\Omega')\|.
\end{multline*}
\end{proof}

\begin{thm}
\label{thm:InverseCompL} Let $\Omega\subset\mathbb{C}$ be a simply connected domain with non-empty boundary and $\varphi : \Omega\to\mathbb D$ be a conformal homeomorphism. Suppose that the Inverse Brennan's Conjecture holds for the interval $[\alpha_0,2/3)$ where $\alpha_{0}\in\big(-2,0\big)$ and $p\in\big({(|\alpha_0|+2)}/{(|\alpha_0|+1)},2\big)$.

Then the inverse
mapping $\varphi^{-1}$ induces a bounded composition operator 
\[
(\varphi^{-1})^{\ast}:{L_{p}^{1}}(\Omega)\to{L_{q}^{1}}(\mathbb{D})
\]
for any $q$ such that 
\[
1\leq q\leq {p\left|\alpha_{0}\right|}/{(2+\left|\alpha_{0}\right|-p)}<2p/(4-p).
\]
\end{thm}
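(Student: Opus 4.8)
The plan is to reduce the assertion to the integrability criterion of Theorem \ref{thm:CompNecSuf} applied to the \emph{inverse} map, and then to recognize the resulting integral as an instance of the Inverse Brennan's Conjecture. First I would set $\psi:=\varphi^{-1}\colon\mathbb{D}\to\Omega$, which is again a conformal homeomorphism. Applying the sufficiency part of Theorem \ref{thm:CompNecSuf} to $\psi$ (its source is $\mathbb{D}$ and its target is $\Omega$), the composition operator $(\varphi^{-1})^{\ast}=\psi^{\ast}\colon L_{p}^{1}(\Omega)\to L_{q}^{1}(\mathbb{D})$ is bounded whenever $1\leq q<p<\infty$ and
\[
\int\limits_{\mathbb{D}}|\psi'(w)|^{\frac{(p-2)q}{p-q}}\,d\mu<+\infty .
\]

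Next I would analyze the exponent $\beta:=\frac{(p-2)q}{p-q}$. Since $p<2$ we have $p-2<0$, and since (as verified below) $q<p$ we have $p-q>0$, so $\beta<0$. In particular the upper Brennan threshold $\beta<2/3$ holds automatically, and to apply the Inverse Brennan's Conjecture on $[\alpha_{0},2/3)$ it suffices to require $\beta\geq\alpha_{0}$. Writing $\alpha_{0}=-|\alpha_{0}|$ and rearranging $\frac{(p-2)q}{p-q}\geq-|\alpha_{0}|$ — that is, multiplying by the positive quantity $p-q$ and collecting the terms containing $q$ — yields precisely
\[
q\leq\frac{p|\alpha_{0}|}{2+|\alpha_{0}|-p}.
\]
For every such $q$ the exponent $\beta$ lies in $[\alpha_{0},2/3)$, so the displayed integral is finite by the Inverse Brennan's Conjecture, and $\psi^{\ast}$ is bounded.

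It then remains to confirm that the stated window for $q$ is nonempty and compatible with the hypotheses of Theorem \ref{thm:CompNecSuf}. The inequality $\frac{p|\alpha_{0}|}{2+|\alpha_{0}|-p}<p$ reduces to $0<2-p$, true since $p<2$; this simultaneously justifies the standing claim $q<p$ used above and guarantees $\beta<0$. That the upper bound exceeds $1$, so that the value $q\geq1$ is attainable, reduces to $p(|\alpha_{0}|+1)\geq|\alpha_{0}|+2$, i.e. $p\geq(|\alpha_{0}|+2)/(|\alpha_{0}|+1)$, which is exactly the hypothesis on $p$. Finally the strict comparison $\frac{p|\alpha_{0}|}{2+|\alpha_{0}|-p}<\frac{2p}{4-p}$ reduces, after clearing the two positive denominators, to $(2-p)(|\alpha_{0}|-2)<0$, which holds because $p<2$ and $|\alpha_{0}|<2$ (recall $\alpha_{0}>-2$).

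The entire argument is thus mechanical once Theorem \ref{thm:CompNecSuf} and the Inverse Brennan's Conjecture are available, and no genuine analytic obstacle survives. The one point that demands care is the bookkeeping of signs and the logical ordering of the constraints: one must establish $q<p$ (needed both to invoke Theorem \ref{thm:CompNecSuf} and to force $\beta<0$) \emph{before} drawing the conclusion, and must check that the admissible interval $1\leq q\leq p|\alpha_{0}|/(2+|\alpha_{0}|-p)$ is exactly the image, under the change of variable $q\mapsto\beta$, of the Brennan range $[\alpha_{0},2/3)$.
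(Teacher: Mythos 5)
Your proposal is correct and follows essentially the same route as the paper's own proof: apply the sufficiency part of Theorem \ref{thm:CompNecSuf} to $\varphi^{-1}$, identify the exponent $\frac{(p-2)q}{p-q}$ with the Inverse Brennan range $[\alpha_0,2/3)$, and translate $\frac{(p-2)q}{p-q}\geq\alpha_0$ into the stated bound $q\leq p|\alpha_0|/(2+|\alpha_0|-p)$ together with the constraint $p>(|\alpha_0|+2)/(|\alpha_0|+1)$ ensuring $q\geq1$ is attainable. Your sign bookkeeping and the verification that the $q$-window sits strictly below both $2p/(4-p)$ and $p$ match the paper's calculations exactly.
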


\begin{proof}
By the Inverse Brennan's Conjecture 
\[
\int\limits _{\mathbb{D}}|(\varphi^{-1})'(w)|^{\alpha}~d\mu<+\infty,\quad\text{for all}\quad-2<\alpha<2/3.
\]

Choose $\alpha \in (-2,0)$. By Theorem \ref{thm:CompNecSuf}, the composition operator
\[
(\varphi^{-1})^{\ast}:L_{p}^{1}(\Omega)\to L_{q}^{1}(\mathbb{D})
\]
is bounded for all $1\leq q<p<2$ such that
$$
\frac{(p-2)q}{p-q}=\alpha.
$$

The inequality 
$$
\frac{(p-2)q}{p-q}<0
$$
holds for all $1<p<2$ and $1\leq q<p<2$.

Because $p<2$ the inequality 
$$
-2<\alpha=\frac{(p-2)q}{p-q}
$$
is equivalent to 
$$
q <\frac{2p}{4-p}<p.
$$
By Theorem \ref{thm:CompNecSuf} we have the restriction on $q$: $q\geq 1$. Hence 
$$
\frac{2p}{4-p}>1, \,\,  \text{i.~e.} \,\, p>\frac{4}{3}.
$$

It is the best possible estimate on $p$ under the assumption that the Inverse Brennan's conjecture is  correct. Because the conjecture is not proved completely, we  can suppose in the statement of this Theorem that it holds for some $-2< \alpha_0 <0$. In this case restrictions on $p$ will depend on $\alpha_0$.

The inequality 
$$
\alpha_0\leq\frac{(p-2)q}{p-q}
$$
is equivalent to 
$$
 1 \leq q \leq\frac{|\alpha_0|p}{2+|\alpha_0|-p}.
$$
Because the function $u(|\alpha_0|)=\frac{|\alpha_0|p}{2+|\alpha_0|-p}$ is an increasing function,  $|\alpha_0|<2$  and $p<2$ we obtain 
$$
q \leq \frac{|\alpha_0|p}{2+|\alpha_0|-p}<\frac{2p}{4-p}<p<2.
$$
Let us check for which numbers $p$ the condition $q\geq 1$  is correct, i.~e. for which numbers $p$ the following inequality 
$$
1< \frac{|\alpha_0|p}{2+|\alpha_0|-p}
$$
holds. By simple calculations we obtain that it holds for 
$$
p > \frac{|\alpha_0|+2}{|\alpha_0|+1}>\frac{4}{3}.
$$

 Two previous inequalities permit us to conclude that for any fixed $p\in({(|\alpha_0|+2)}/{(|\alpha_0|+1)},2)$ and for any $q$ such that 
$$
1\leq q \leq \frac{|\alpha_0|p}{2+|\alpha_0|-p}<\frac{2p}{4-p}<2
$$
the inequality
\[
\|\nabla(f\circ\varphi^{-1})\mid L_{q}(\mathbb{D})\|\leq K\| \nabla f\mid L_{p}(\Omega)\|
\]
holds for every 
function $f\in L_{p}^{1}(\Omega)$.
\end{proof}

\begin{prop}
\label{thm:PoincareCompSup} Suppose that $\Omega\subset\mathbb{C}$ is a simply connected domain with non empty boundary, the Inverse Brennan's Conjecture holds for the interval $[\alpha_0,2/3)$ where $\alpha_{0}\in\big(-2,0\big)$ and $h(z)=J(z,\varphi)$ is
the conformal weight defined by a conformal homeomorphism $\varphi : \Omega\to\mathbb D$. Then for every ${(|\alpha_0|+2)}/{(|\alpha_0|+1)}<p<2$ and every function $f\in C_{0}^{\infty}(\Omega)$, the inequality
\[
\|f\mid L_{r}(\Omega,h)\|\leq M\|\nabla f\mid L_{p}(\Omega)\|
\]
holds for any $r$ such that \[
1\leq r\leq \frac{2p}{2-p}\cdot\frac{\left|\alpha_{0}\right|}{2+\left|\alpha_{0}\right|}<\frac{p}{2-p}.
\]
The constant $M$ does not depend on~$f$.\end{prop}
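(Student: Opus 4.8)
The plan is to run the same ``transfer'' scheme as in Theorem~\ref{thm:Weightem}, but now drawing on the sharper composition estimate of Theorem~\ref{thm:InverseCompL} in place of the conformal isometry of Lemma~\ref{lem:isometry}. I would fix $f\in C_0^{\infty}(\Omega)$ and set $g=f\circ\varphi^{-1}$. Since $\mathrm{supp}(f)$ is compact in $\Omega$ and $\varphi$ is a homeomorphism, $\varphi(\mathrm{supp}(f))$ is compact in $\mathbb{D}$, so $g\in C_0^{\infty}(\mathbb{D})$, and the claimed inequality on $\Omega$ is thereby reduced to a Poincar\'e--Sobolev inequality for $g$ on the unit disc.

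The reduction would rest on two ingredients already available. First, the change of variables $w=\varphi(z)$ together with $J(z,\varphi)\,d\mu(z)=d\mu(w)$ gives, exactly as in Theorem~\ref{thm:Weightem}, the identity
\[
\|f\mid L_r(\Omega,h)\|=\biggl(\int\limits_\Omega|f(z)|^r J(z,\varphi)\,d\mu\biggr)^{\frac{1}{r}}=\biggl(\int\limits_{\mathbb{D}}|g(w)|^r\,d\mu\biggr)^{\frac{1}{r}}=\|g\mid L_r(\mathbb{D})\|.
\]
Second, since $p\in\big((|\alpha_0|+2)/(|\alpha_0|+1),2\big)$, Theorem~\ref{thm:InverseCompL} applies; choosing $q$ equal to the largest admissible value $q_0=p|\alpha_0|/(2+|\alpha_0|-p)$ yields a constant $K$ with
\[
\|\nabla g\mid L_{q_0}(\mathbb{D})\|\leq K\|\nabla f\mid L_p(\Omega)\|.
\]

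Next I would invoke the classical Sobolev--Poincar\'e inequality on the bounded domain $\mathbb{D}\subset\mathbb{R}^2$. Because the hypothesis on $p$ forces $1<q_0<2$, the embedding $\overset{\circ}{W_{q_0}^{1}}(\mathbb{D})\hookrightarrow L_r(\mathbb{D})$ holds for every $r$ with $1\leq r\leq q_0^{*}$, where $q_0^{*}=2q_0/(2-q_0)$ is the Sobolev conjugate exponent, and accordingly $\|g\mid L_r(\mathbb{D})\|\leq C\|\nabla g\mid L_{q_0}(\mathbb{D})\|$. Chaining the three displays would then give the asserted inequality with $M=CK$.

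The only step requiring genuine computation --- and the place I expect to need care --- is to verify that the Sobolev conjugate $q_0^{*}$ coincides with the bound stated for $r$. Writing $a=|\alpha_0|$, a direct calculation gives $2-q_0=(2+a)(2-p)/(2+a-p)$, whence
\[
q_0^{*}=\frac{2q_0}{2-q_0}=\frac{2pa}{(2+a)(2-p)}=\frac{2p}{2-p}\cdot\frac{|\alpha_0|}{2+|\alpha_0|},
\]
and the strict inequality $q_0^{*}<p/(2-p)$ is just the elementary fact $2|\alpha_0|/(2+|\alpha_0|)<1$, valid since $|\alpha_0|<2$. Thus every admissible $r$ satisfies $r\leq q_0^{*}$, which closes the argument.
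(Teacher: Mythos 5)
Your proposal is correct and follows essentially the same route as the paper: transfer via the change of variables $\|f\mid L_r(\Omega,h)\|=\|f\circ\varphi^{-1}\mid L_r(\mathbb{D})\|$, the composition bound of Theorem~\ref{thm:InverseCompL}, and the classical Poincar\'e--Sobolev inequality on $\mathbb{D}$. Your explicit computation that $q_0^{*}=\frac{2p}{2-p}\cdot\frac{|\alpha_0|}{2+|\alpha_0|}$ merely spells out the step the paper summarizes as ``combining inequalities for $q$ and $r$,'' and it is accurate.
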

\begin{proof}
By the Riemann Mapping Theorem, there exists a conformal homeomorphism
$w=\varphi(z):\Omega\to\mathbb{D}$, and by the Inverse Brennan's Conjecture, 
\[
\int\limits _{\mathbb{D}}|(\varphi^{-1})^{\prime}(w)|^{\alpha}~dw<+\infty,\quad\text{for all}\quad-2<\alpha_0<\alpha<2/3.
\]
Hence, by Theorem \ref{thm:InverseCompL}, 
the inequality
\[
\|\nabla(f\circ\varphi^{-1})\mid L_{q}(\mathbb{D})\|\leq K\| \nabla f\mid L_{p}(\Omega)\|
\]
holds for every 
function $f\in L_{p}^{1}(\Omega)$ and for any $q$ such that 
\[
1\leq q\leq {p\left|\alpha_{0}\right|}/{(2+\left|\alpha_{0}\right|-p)}<2p/(4-p).
\]

Choose arbitrarily $f\in C_{0}^{\infty}(\Omega)$. Then
  $f\circ\varphi^{-1}\in C_{0}^{\infty}(\mathbb{D})$ and, by the classical Poincar\'e-Sobolev inequality,
\begin{equation}
\|f\circ\varphi^{-1}\mid L_{r}(\mathbb{D})\|\leq A\|\nabla (f\circ\varphi^{-1})\mid L_{q}(\mathbb{D})\| \label{eq:PS}
\end{equation}
for any $r$ such that 
\[
1\leq r\leq \frac{2q}{2-q}
\]
Combining inequalities for $q$ and $r$ we conclude that the inequality (\ref{eq:PS}) holds for any $r$ such that
\[
1\leq r\leq \frac{2p}{2-p}\cdot\frac{\left|\alpha_{0}\right|}{2+\left|\alpha_{0}\right|}<\frac{p}{2-p}.
\]

Using the change of variable formula, we finally infer
\begin{multline*}
\|f\mid L_{r}(\Omega,h)\|=\biggl(\int\limits _{\Omega}|f(z)|^{r}J(z,\varphi)~d\mu\biggr)^{\frac{1}{r}}=\biggl(\int\limits _{\mathbb{D}}|f(\varphi^{-1}(w))|^{r}~d\mu\biggr)^{\frac{1}{r}}\\
\\
=\|f\circ\varphi^{-1}\mid L_{r}(\mathbb{D})\|\leq A\|\nabla (f\circ\varphi^{-1})\mid L_{q}(\mathbb{D})\|\leq AK\|\nabla f\mid L_{p}(\Omega)\|.
\end{multline*}
\end{proof}

\begin{thm}
\label{thm:InverseComp} Let $\Omega\subset\mathbb{C}$ be a simply connected domain with non-empty boundary and $\varphi : \Omega\to\mathbb D$ be a conformal homeomorphism. Suppose that the Inverse Brennan's Conjecture holds for the interval $[\alpha_0,2/3)$ where $\alpha_{0}\in\big(-2,0\big)$ and $p\in\big({(|\alpha_0|+2)}/{(|\alpha_0|+1)},2\big)$.

Then the inverse
mapping $\varphi^{-1}$ induces a bounded composition operator 
\[
(\varphi^{-1})^{\ast}:\overset{\circ}{W_{p}^{1}}(\Omega)\to\overset{\circ}{W_{q}^{1}}(\mathbb{D})
\]
for any $q$ such that 
\[
1\leq q\leq {p\left|\alpha_{0}\right|}/{(2+\left|\alpha_{0}\right|-p)}<2p/(4-p).
\]
\end{thm}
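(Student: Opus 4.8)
The plan is to run the same argument that established Theorem~\ref{thm:Comp} in the case $p=2$, but with Lemma~\ref{lem:isometry} (the conformal isometry of homogeneous Dirichlet energies) replaced by the boundedness of the homogeneous composition operator already obtained in Theorem~\ref{thm:InverseCompL}. Since the analytic heart of the matter --- the gradient estimate that depends on the Inverse Brennan integrability exponent $\alpha_0$ and the admissible range of $q$ --- is packaged inside Theorem~\ref{thm:InverseCompL}, the only genuinely new ingredient here is the passage from the homogeneous seminorm on $L_q^1(\mathbb D)$ to the full Sobolev norm on $\overset{\circ}{W_q^1}(\mathbb D)$, which I would supply by a Poincar\'e inequality on the bounded range domain $\mathbb D$.

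First I would fix the admissible parameters: given $\alpha_0\in(-2,0)$ with the Inverse Brennan's Conjecture valid on $[\alpha_0,2/3)$, take $p\in\big((|\alpha_0|+2)/(|\alpha_0|+1),2\big)$ and $q$ with $1\le q\le p|\alpha_0|/(2+|\alpha_0|-p)<2p/(4-p)$. For these choices Theorem~\ref{thm:InverseCompL} already yields a constant $K<\infty$ with
\[
\|\nabla(f\circ\varphi^{-1})\mid L_q(\mathbb D)\|\le K\,\|\nabla f\mid L_p(\Omega)\|
\]
for every $f\in L_p^1(\Omega)$, hence in particular for $f\in C_0^\infty(\Omega)$.

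Next I would record the compact-support bookkeeping. Because $\varphi^{-1}:\mathbb D\to\Omega$ is a conformal diffeomorphism, for $f\in C_0^\infty(\Omega)$ the composition $g=f\circ\varphi^{-1}$ is smooth and its support equals $\varphi(\operatorname{supp} f)$, which is a compact subset of $\mathbb D$; thus $g\in C_0^\infty(\mathbb D)\subset\overset{\circ}{W_q^1}(\mathbb D)$. Since $\mathbb D$ is bounded, the classical Poincar\'e inequality supplies a constant $C$, independent of $g$, with $\|g\mid L_q(\mathbb D)\|\le C\|\nabla g\mid L_q(\mathbb D)\|$. Combining this with the gradient estimate gives
\[
\|g\mid\overset{\circ}{W_q^1}(\mathbb D)\|=\|g\mid L_q(\mathbb D)\|+\|\nabla g\mid L_q(\mathbb D)\|\le(C+1)K\,\|\nabla f\mid L_p(\Omega)\|\le(C+1)K\,\|f\mid\overset{\circ}{W_p^1}(\Omega)\|.
\]

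Finally I would extend from smooth compactly supported $f$ to arbitrary $f\in\overset{\circ}{W_p^1}(\Omega)$ by the density of $C_0^\infty(\Omega)$ in $\overset{\circ}{W_p^1}(\Omega)$ together with the continuity of the estimate, exactly as at the end of the proof of Theorem~\ref{thm:Comp}. The point where care is needed --- and the only real obstacle --- is that the Poincar\'e inequality must be applied on the \emph{range} side, which is why it is essential that $\varphi$ maps onto the bounded disc $\mathbb D$ rather than onto the (possibly unbounded) domain $\Omega$, where no such inequality need hold. I would also double-check that the density passage is legitimate in the regime $q<p$, i.e.\ that the limit is taken in a topology under which the displayed inequality survives, relying on the references already invoked for Theorem~\ref{thm:Comp}.
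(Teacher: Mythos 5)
Your proposal is correct and follows essentially the same route as the paper's proof: the gradient bound from Theorem~\ref{thm:InverseCompL}, the Poincar\'e inequality on the bounded disc $\mathbb{D}$ to control the $L_q$ norm of the composition, and density of $C_0^{\infty}(\Omega)$ in $\overset{\circ}{W_{p}^{1}}(\Omega)$ to conclude. The only difference is cosmetic: you observe directly that $f\circ\varphi^{-1}\in C_0^{\infty}(\mathbb{D})\subset\overset{\circ}{W_{q}^{1}}(\mathbb{D})$, whereas the paper verifies the $L_q$-membership by an additional H\"older estimate involving $J(w,\varphi^{-1})$ --- a step your observation makes unnecessary.
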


{\bf Remark. } Of course, we can choose the best known estimate $\alpha_{0}>-2$ in the Inverse Brennan's
Conjecture.
\begin{proof}
By the Inverse Brennan's Conjecture, we have 
\[
\int\limits _{\mathbb{D}}|(\varphi^{-1})^{\prime}(w)|^{\alpha}~d\mu<+\infty,\quad\text{for all}\quad -2<\alpha_{0}<\alpha<2/3.
\]
Let $f\in \overset{\circ}{W_{p}^{1}}(\Omega)$ be a smooth function with a compact support. By Theorem \ref{thm:InverseCompL}, for any function $f\in C_{0}^{\infty}(\Omega)$,
the inequality 
\[
\|f\circ\varphi^{-1}\mid L_{q}^{1}(\mathbb{D})\|\leq C\|f\mid L_{p}^{1}(\Omega)\|
\]
holds for all $q: 1\leq q\leq {p\left|\alpha_{0}\right|}/({2+\left|\alpha_{0}\right|-p)}<2p/(4-p)<2$. 
Here the constant $0<C<\infty$ is independent of~$f$.

Since $f\in C_{0}^{\infty}(\Omega)$, its $\mathrm{supp}(f)$
is compact and its image $\varphi(\mathrm{supp}(f))$ is compact too. The following estimate holds
for $\|f\circ\varphi^{-1}|L_{q}(\mathbb{D})\|$:
\begin{multline*}
\|f\circ\varphi^{-1}|L_{q}(\mathbb{D})\|=\biggl(\int\limits _{\varphi(\mathrm{supp}(f))}|f\circ\varphi^{-1}|^{q}~d\mu\biggr)^{\frac{1}{q}}\\
=\biggl(\int\limits _{\varphi(\mathrm{supp}(f))}|f\circ\varphi^{-1}|^{q}J(w,\varphi^{-1})^{\frac{q}{p}}\frac{1}{J(w,\varphi^{-1})^{\frac{q}{p}}}~d\mu\biggr)^{\frac{1}{q}}\\
\leq\biggl(\int\limits _{\varphi(\mathrm{supp}(f))}J(w,\varphi^{-1})^{\frac{q}{q-p}}~d\mu\biggr)^{\frac{p-q}{pq}}\biggl(\int\limits _{\varphi(\mathrm{supp}(f))}|f\circ\varphi^{-1}|^{p}J(w,\varphi^{-1})~d\mu\biggr)^{\frac{1}{p}}.
\end{multline*}

Putting 
\[
Q(\varphi,f):=\biggl(\int\limits _{\varphi(\mathrm{supp}(f))}J(w,\varphi^{-1})^{\frac{q}{q-p}}~d\mu\biggr)^{\frac{p-q}{pq}},
\]
we conclude that 
\[
\|f\circ\varphi^{-1}|L_{q}(\mathbb{D})\|\leq Q(\varphi,f)\|f|L_{p}(\Omega)\|.
\]

Thus, the function $f\circ\varphi^{-1}$ belongs to the Sobolev space
$\overset{\circ}{W_{q}^{1}}(\mathbb{D})$.
Using the Poincar\'e-Sobolev inequality for the unit disc 
\[
\|f\circ\varphi^{-1}\mid L_{q}(\mathbb{D})\|\leq A\|\nabla (f\circ\varphi^{-1})\mid L_{q}(\mathbb{D})\|
\]
we obtain the inequality 
\begin{multline*}
\|f\circ\varphi^{-1}\mid W_{q}^{1}(\mathbb{D})\|=\|f\circ\varphi^{-1}\mid L_{q}(\mathbb{D})\|+\|\nabla(f\circ\varphi^{-1})\mid L_{q}(\mathbb{D})\|\\
\leq(A+1)\|\nabla(f\circ\varphi^{-1})\mid L_{q}(\mathbb{D})\|\leq(A+1)C\|f\mid L_{p}^{1}(\Omega)\|\leq K\|f\mid W_{p}^{1}(\Omega)\|
\end{multline*}
that holds for every $f\in C_{0}^{\infty}(\Omega)$.

Using the density of smooth functions with compact supports in $f\in\overset{\circ}{W_{p}^{1}}(\Omega)$,
we can extend the last inequality to arbitrary $f\in\overset{\circ}{W_{p}^{1}}(\Omega)$
(see, for example, \cite{VU1, GMU}). It allows us to conclude
 finally that the composition operators
\[
(\varphi^{-1})^{\ast}:\overset{\circ}{W_{p}^{1}}(\Omega)\to\overset{\circ}{W_{q}^{1}}(\mathbb{D})
\]
are bounded for all $q$ satisfying the conditions $1\leq q\leq {p\left|\alpha_{0}\right|}/({2+\left|\alpha_{0}\right|-p)}<2p/(4-p)$. \end{proof}

\begin{thm}
\label{thm:WeightEmb} Suppose that $\Omega\subset\mathbb{C}$ is a simply connected domain with non empty boundary, the Inverse Brennan's Conjecture holds for the interval $[\alpha_0,2/3)$ where $\alpha_{0}\in\big(-2,0\big)$, $p\in\big({(|\alpha_0|+2)}/{(|\alpha_0|+1)},2\big)$  and $h(z)=J(z,\varphi)$ is the conformal weight defined by a conformal mapping $\varphi : \Omega\to\mathbb D$.   

Then the embedding operator
\[
j_r:\overset{\circ}{W_{p}^{1}}(\Omega)\hookrightarrow L_{r}(\Omega,h)
\]
is compact for every $r$ such that 
\[
1\leq r< \frac{2p}{2-p}\cdot\frac{\left|\alpha_{0}\right|}{2+\left|\alpha_{0}\right|}<\frac{p}{2-p}.
\]
\end{thm}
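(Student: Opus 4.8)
The plan is to factor the embedding $j_r$ through the unit disc, in the same spirit as the compactness theorem for $\overset{\circ}{W_2^1}(\Omega)$ proved above, but now using the bounded composition operator supplied by Theorem \ref{thm:InverseComp} together with the Rellich--Kondrachov theorem on $\mathbb{D}$.

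First I would fix, via the Riemann Mapping Theorem, a conformal homeomorphism $\varphi:\Omega\to\mathbb{D}$, so that $h(z)=J(z,\varphi)$. The change of variables $w=\varphi(z)$, $d\mu(w)=J(z,\varphi)\,d\mu(z)$, yields for every measurable function $g$ on $\mathbb{D}$ the identity
\[
\int\limits_{\Omega}|g(\varphi(z))|^{r}J(z,\varphi)\,d\mu=\int\limits_{\mathbb{D}}|g(w)|^{r}\,d\mu,
\]
so that the map $V:g\mapsto g\circ\varphi$ is an isometric isomorphism of $L_r(\mathbb{D})$ onto $L_r(\Omega,h)$ for every $r$.

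Next, choosing $q=p|\alpha_0|/(2+|\alpha_0|-p)$, Theorem \ref{thm:InverseComp} guarantees that the composition operator $(\varphi^{-1})^{\ast}:\overset{\circ}{W_p^1}(\Omega)\to\overset{\circ}{W_q^1}(\mathbb{D})$ is bounded. By the Rellich--Kondrachov theorem on the bounded regular domain $\mathbb{D}$, the embedding $i_r:\overset{\circ}{W_q^1}(\mathbb{D})\hookrightarrow L_r(\mathbb{D})$ is compact for every $r$ strictly below the critical Sobolev exponent $q^{\ast}=2q/(2-q)$. The same elementary computation that appears in the proof of Proposition \ref{thm:PoincareCompSup} gives, for this choice of $q$,
\[
\frac{2q}{2-q}=\frac{2p}{2-p}\cdot\frac{|\alpha_0|}{2+|\alpha_0|},
\]
so $i_r$ is compact precisely for $1\leq r<\frac{2p}{2-p}\cdot\frac{|\alpha_0|}{2+|\alpha_0|}$.

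Finally, for every $f\in\overset{\circ}{W_p^1}(\Omega)$ we have $f=(f\circ\varphi^{-1})\circ\varphi$, which shows that the embedding factors as $j_r=V\circ i_r\circ(\varphi^{-1})^{\ast}$, a composition of a bounded operator, a compact operator, and an isometric isomorphism. Since the compact operators form a two-sided ideal among bounded operators, $j_r$ is compact for $r$ in the stated range. The one point requiring care---and the only real difference from the boundedness statement of Proposition \ref{thm:PoincareCompSup}, which permitted equality at the endpoint---is that Rellich--Kondrachov yields compactness only strictly below the critical exponent $q^{\ast}$; this is exactly what forces the strict inequality $r<\frac{2p}{2-p}\cdot\frac{|\alpha_0|}{2+|\alpha_0|}$ here, rather than the endpoint-inclusive estimate valid for mere boundedness.
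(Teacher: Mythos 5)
Your proposal is correct and follows essentially the same route as the paper's own proof: factor $j_r$ as the bounded composition operator $(\varphi^{-1})^{\ast}:\overset{\circ}{W_p^1}(\Omega)\to\overset{\circ}{W_q^1}(\mathbb{D})$ from Theorem \ref{thm:InverseComp}, the compact embedding $\overset{\circ}{W_q^1}(\mathbb{D})\hookrightarrow L_r(\mathbb{D})$, and the change-of-variables operator $L_r(\mathbb{D})\to L_r(\Omega,h)$. Your explicit verification that $2q/(2-q)=\frac{2p}{2-p}\cdot\frac{|\alpha_0|}{2+|\alpha_0|}$ for the extremal choice of $q$, and your remark on why compactness forces the strict inequality at the endpoint, are welcome clarifications of details the paper leaves implicit, but they do not change the argument.
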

{\bf Remark. } Of course, we can choose the best known estimate $\alpha_{0}>-2$ in the Inverse Brennan's
Conjecture.
\begin{proof}
Choose an~arbitrary function $f\in\overset{\circ}{W_{p}^{1}}(\Omega)$. Let $w=\varphi(z): \Omega\to\mathbb D$ 
be a conformal homeomorphism. By the previous theorem, $(\varphi^{-1})^{\ast}f\in\overset{\circ}{W_{q}^{1}}(\mathbb{D})$
for $1\leq q\leq {p\left|\alpha_{0}\right|}/({2+\left|\alpha_{0}\right|-p)}<2p/(4-p)<2$ and 
\[
\|(\varphi^{-1})^{\ast}f\mid\overset{\circ}{W_{q}^{1}}(\mathbb{D})\|\leq K\|f\mid\overset{\circ}{W_{p}^{1}}(\Omega)\|
\]
where $K<\infty$ does not depend on~$f$. By the classical embedding theorem,
the function $(\varphi^{-1})^{\ast}f\in L_{r}(\mathbb{D})$ for
$r : 1\leq r<{2p\left|\alpha_{0}\right|}/{(2\left|\alpha_{0}\right|+4-p(2+\left|\alpha_{0}\right|))}<{p}/{(2-p)}$
and the inequality 
\[
\|(\varphi^{-1})^{\ast}f\mid L_{r}(\mathbb{D})\|\leq A\|(\varphi^{-1})^{\ast}f\mid\overset{\circ}{W_{q}^{1}}(\mathbb{D})\|
\]
holds for any $f\in \overset{\circ}{W_{q}^{1}}(\mathbb{D})$.
Using the change of variable formula, we obtain 
\begin{multline*}
\|f\mid L_{r}(\Omega,h)\|=\biggl(\int\limits _{\Omega}|f(z)|^{r}J(z,\varphi)~d\mu\biggr)^{\frac{1}{r}}=\biggl(\int\limits _{\mathbb{D}}|f(\varphi^{-1}(w))|^{r}~d\mu\biggr)^{\frac{1}{r}}\\
\\
=\|(\varphi^{-1})^{\ast}f\mid L_{r}(\mathbb{D})\|\leq A\|(\varphi^{-1})^{\ast}f\mid\overset{\circ}{W_{q}^{1}}(\mathbb{D})\|\leq AK\|f\mid\overset{\circ}{W_{p}^{1}}(\Omega)\|.
\end{multline*}
Consequently, the embedding operator 
\[
j_r:\overset{\circ}{W_{p}^{1}}(\Omega)\hookrightarrow L_{r}(\Omega,h)
\]
is compact as the composition of the bounded composition operator
$$
(\varphi^{-1})^{\ast} : \overset{\circ}{W^1_p}(\Omega)\to \overset{\circ}{W^1_1}(\mathbb D),
$$
compact embedding operator 
$$
i_r: \overset{\circ}{W_{q}^{1}}(\mathbb{D})\hookrightarrow L_{r}(\mathbb{D})
$$
and the bounded composition operator 
$$
\varphi^{\ast} : L_r(\mathbb D) \to L_r(\Omega,h).
$$
\end{proof}

\section{Applications to Elliptic Equations}

As a standard application, we prove the solvability of the classical Dirichlet
problem for the degenerate Laplace operator on an arbitrary simply connected
plane domain $\Omega\subset\mathbb{C}$ with non-empty boundary.

Define the weighted Sobolev space $W_{p}^{1}(\Omega,h,1)$, $1\leq p<\infty$, 
as the normed space of locally integrable weakly differentiable functions
$f:\Omega\to\mathbb{R}$ equipped with the following norm: 
\[
\|f\mid W_{p}^{1}(\Omega)\|=\biggr(\int\limits _{\Omega}|f|^{p}(z)\,h(z)\, d\mu\biggr)^{1/p}+\biggr(\int\limits _{\Omega}|\nabla f(z)|^{p} \, d\mu\biggr)^{1/p}.
\]
Here $h$ is the universal conformal weight.

Such type weighted spaces were introduced and investigated in \cite{GMU} for $\mathcal{V}_p$-weights.

The Sobolev space $\overset{\circ}{W_{p}^{1}}(\Omega,h,1)$, $1\leq p<\infty$, 
is defined as the closure of the space $C_{0}^{\infty}(\Omega)$  of smooth functions with compact
support in the norm of $W_{p}^{1}(\Omega,h,1)$.

We shall use a short notation for the inner products: $<u,v>:=\int_\Omega u(z)v(z)d\mu $ 
in $L_2(\Omega)$ and  $<u,v>_h:=\int_\Omega u(z)v(z)h(z)d\mu $ in $L_2(\Omega,h)$ and also the notation
$[u,v]:=<\nabla u, \nabla v>$. 

The problem is as follows: 
\begin{gather} \label{Dir}
\Delta u=fh\,\, \text{in}\,\,\Omega,\\
u\vert_{\partial\Omega}=0. \label{Dir1}
\end{gather}

The weak statement of this Dirichlet problem is as follows:

A function $u$ solves the previous problem iff $u\in \overset{\circ}{W_{2}^{1}}(\Omega,h,1) $
and $$[u,v]=<\nabla u,\nabla v>=\int_\Omega f(z)v(z)h(z)d\mu$$ for all
$v\in \overset{\circ}{W_{2}^{1}}(\Omega,h,1)$, under provided that this integral makes sense.

We prove that, for any simply connected plane domain with non-empty boundary and for any $f\in L_{p}(\Omega,h)$, 
$1<p<\infty$, there exists a~unique (weak) solution to the problem (\ref{Dir},\ref{Dir1}).

\begin{thm}
\label{thm:DirProb}Let $\Omega$ be a simply connected plane domain
with non-empty boundary and let $1<p<\infty$. If $f\in L_{p}(\Omega,h)$
then there exists the unique weak solution $u\in\overset{\circ}{W}_{2}^{1}(\Omega,h,1)$
of the problem (\ref{Dir},\ref{Dir1}). \end{thm}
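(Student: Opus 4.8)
The plan is to recast the weak formulation as a Riesz representation problem on the Hilbert space $H:=\overset{\circ}{W}_{2}^{1}(\Omega,h,1)$. First I would note that $H$ is a Hilbert space, since its defining norm is assembled from two $L_{2}$-type terms, and that on $H$ the Dirichlet form $[u,v]=<\nabla u,\nabla v>$ supplies an inner product whose norm $[u,u]^{1/2}=\|\nabla u\mid L_{2}(\Omega)\|$ is \emph{equivalent} to the full norm of $W_{2}^{1}(\Omega,h,1)$. The nontrivial half of this equivalence is exactly Theorem \ref{thm:Weightem} with $r=2$: for every $f\in C_{0}^{\infty}(\Omega)$,
$$
\|f\mid L_{2}(\Omega,h)\|\leq K\|\nabla f\mid L_{2}(\Omega)\|,
$$
which by density of $C_{0}^{\infty}(\Omega)$ in $H$ extends to every $u\in H$ and yields $\|u\mid W_{2}^{1}(\Omega,h,1)\|\leq(1+K)\|\nabla u\mid L_{2}(\Omega)\|$; the reverse bound is immediate. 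In particular $[\cdot,\cdot]$ is positive definite, so $(H,[\cdot,\cdot])$ is a Hilbert space.

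Next I would verify that the right-hand side of the weak problem is a bounded linear functional on $H$. Given $f\in L_{p}(\Omega,h)$ with $1<p<\infty$, set $\ell(v):=<f,v>_{h}=\int_{\Omega}f(z)v(z)h(z)\,d\mu$ and let $p'=p/(p-1)$ be the conjugate exponent. Applying H\"older's inequality in the weighted space with measure $d\nu=h\,d\mu$ gives
$$
|\ell(v)|\leq\|f\mid L_{p}(\Omega,h)\|\cdot\|v\mid L_{p'}(\Omega,h)\|,
$$
which in particular shows that the defining integral of the weak problem converges absolutely, so it indeed ``makes sense''. Since $1<p<\infty$ forces $1<p'<\infty$, Theorem \ref{thm:Weightem} with $r=p'$ (again extended from $C_{0}^{\infty}(\Omega)$ to $H$ by density) bounds the second factor by $K'\|\nabla v\mid L_{2}(\Omega)\|=K'[v,v]^{1/2}$. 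Hence $|\ell(v)|\leq K'\|f\mid L_{p}(\Omega,h)\|\,[v,v]^{1/2}$, so $\ell$ is bounded on $(H,[\cdot,\cdot])$.

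Finally I would invoke the Riesz representation theorem: there is a unique $u\in H$ with $[u,v]=\ell(v)$ for all $v\in H$, that is, $<\nabla u,\nabla v>=\int_{\Omega}f(z)v(z)h(z)\,d\mu$ for every $v\in\overset{\circ}{W}_{2}^{1}(\Omega,h,1)$. By construction this $u$ is precisely the weak solution of (\ref{Dir},\ref{Dir1}), and its uniqueness is part of the Riesz theorem.

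The main obstacle is the boundedness of $\ell$, and this is where the hypothesis $1<p<\infty$ is essential: the estimate degenerates at $p=1$, where $p'=\infty$ and Theorem \ref{thm:Weightem} no longer controls the factor $\|v\mid L_{p'}(\Omega,h)\|$. Everything else reduces to routine verification once the weighted Poincar\'e--Sobolev inequality of Theorem \ref{thm:Weightem} is in hand for the two exponents $r=2$ and $r=p'$; the only technical point requiring care is the density extension of that inequality from $C_{0}^{\infty}(\Omega)$ to $H$, which is standard.
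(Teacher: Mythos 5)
Your proof is correct and follows essentially the same route as the paper: both equip $\overset{\circ}{W}_{2}^{1}(\Omega,h,1)$ with the Dirichlet inner product $[\cdot,\cdot]$ (norm equivalence via Theorem \ref{thm:Weightem} with $r=2$), bound the functional $v\mapsto <f,v>_{h}$ by H\"older's inequality together with Theorem \ref{thm:Weightem} for $r=p'$, and conclude with the Riesz representation theorem. The only cosmetic difference is that the paper extends the bounded functional from the dense subset $C_{0}^{\infty}(\Omega)$, whereas you extend the weighted Poincar\'e--Sobolev inequalities themselves to $H$ by density; the two are interchangeable.
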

\begin{proof}
The function  $f\in L_{p}(\Omega,h)$ induces a linear functional
$F:C_{0}^{\infty}(\Omega)\rightarrow \mathbb R$ by the standard rule 
\[
F(v)=\int\limits _{\Omega}f(z)v(z)h(z)~d\mu,\,\,\, v\in C_{0}^{\infty}(\Omega).
\]
By Theorem \ref{thm:Weightem}: 
\[
\|v|L_{2}(\Omega,h)\|\leq K\|\nabla v|L_{2}(\Omega)\|.
\]

The last inequality shows that the norm $[u,u]^{\frac{1}{2}}:=\|\nabla u|L_{2}(\Omega)\|$ 
is equivalent to the $W^{1}_2(\Omega,h,1)$-norm on $\overset{\circ}{W}_{2}^{1}(\Omega,h,1)$
 and the corresponding inner product $[u,v]$ induces a Hilbert structure on $\overset{\circ}{W_{2}^{1}}(\Omega,h,1)$. 

Using Theorem \ref{thm:Weightem} and the~density of $C_0^{\infty}(\Omega)$ in $\overset{\circ}{W}_{2}^{1}(\Omega,h,1)$,
we show that $F$ is a bounded linear functional in $\overset{\circ}{W_{2}^{1}}(\Omega,h,1)$.
 
For any $v \in C_{0}^{\infty}(\Omega)$ by Theorem \ref{thm:Weightem} we have
\begin{multline*}
|F(v)|\leq\|f\cdot v\mid L_{1}(\Omega,h)\|\\
\leq\|f\mid L_{p}(\Omega,h)\|\cdot\|v\mid L_{p'}(\Omega,h)\|\leq C\|f\mid L_{p}(\Omega,h)\|\cdot\|v\mid\overset{\circ}{W}_{2}^{1}(\Omega,h,1)\|,
\end{multline*}
for any $1\leq p < \infty$, $p'={p}/{(p-1)}$.

Therefore, the bounded functional $F$ defined on the dense subset $C_{0}^{\infty}(\Omega)$ can be extended 
to $\overset{\circ}{W}_{2}^{1}(\Omega,h,1)$.

Hence, by the Riesz theorem about linear functionals in a Hilbert space,  
$[u,v]=[Bf,v]$ for all $v \in \overset{\circ}{W}_{2}^{1}(\Omega,h,1)$,
where $B$ is a bounded linear operator from $L^{p}(\Omega,h)$ into $\overset{\circ}{W}_{2}^{1}(\Omega,h,1)$.
Thus, $u=Bf$ is a~unique solution for (\ref{Dir},\ref{Dir1}).
\end{proof}

\noindent
Vladimir Gol'dshtein  \,  \hskip 3.2cm Alexander Ukhlov

\noindent
Department of Mathematics   \hskip 2.25cm Department of Mathematics

\noindent
Ben-Gurion University of the Negev  \hskip 1.05cm Ben-Gurion University of the Negev

\noindent
P.O.Box 653, Beer Sheva, 84105, Israel  \hskip 0.7cm P.O.Box 653, Beer Sheva, 84105, Israel

\noindent
E-mail: vladimir@bgu.ac.il  \hskip 2.5cm E-mail: ukhlov@math.bgu.ac.il

\end{document}